\numberwithin{equation}{section}
\newtheorem{pro}{Proposition}[section]
\newtheorem{lemma}[pro]{Lemma}
\newtheorem{theorem}[pro]{Theorem}
\newtheorem{definition}[pro]{Definition}
\newtheorem{remark}[pro]{Remark}
\newtheorem{corollary}[pro]{Corollary}
\newtheorem{problem}[pro]{Problem}
\newtheorem{conjecture}[pro]{Conjecture}
\newtheorem*{proA*}{Proposition A}
\newtheorem*{thmA*}{Theorem A}
\newtheorem*{thmB*}{Theorem B}
\newtheorem*{coroB*}{Corollary B}
\newtheorem*{coroC*}{Corollary C}
\newtheorem*{thmC*}{Theorem C}
\newtheorem*{coroD*}{Corollary D}
\newtheorem*{thmD*}{Theorem D}
\newtheorem*{fact1*}{Fact 1}
\newcommand{\al}{\alpha}
\newcommand{\ga}{\gamma}
\newcommand{\lam}{\lambda}
\newcommand{\ra}{{\rightarrow}}
\newcommand{\lra}{{\longrightarrow}}
\newcommand{\ie}{{i.e$.$\,}}
\newcommand{\cf}{{cf$.$\,}}
\newcommand{\CC}{{\mathbb C}}
\newcommand{\HH}{{\mathbb H}}
\newcommand{\NN}{{\mathbb N}}
\newcommand{\PP}{{\mathbb P}}
\newcommand{\RR}{{\mathbb R}}
\newcommand{\ZZ}{{\mathbb Z}}
\newcommand{\rA}{\mathop{\rm A}\nolimits}
\newcommand{\rE}{\mathop{\rm E}\nolimits}
\newcommand{\rO}{\mathop{\rm O}\nolimits}
\newcommand{\rSim}{\mathop{\rm Sim}\nolimits}
\newcommand{\rPO}{\mathop{\rm PO}\nolimits}
\newcommand{\rGL}{\mathop{\rm GL}\nolimits}
\newcommand{\PO}{\mathop{\rm PO}\nolimits}
\newcommand{\Sim}{\mathop{\rm Sim}\nolimits}
\newcommand{\Aut}{\mathop{\rm Aut}\nolimits}
\newcommand{\dev}{\mathop{\rm dev}\nolimits}
\newcommand{\cR}{{\mathcal{R}}}
\newcommand{\cN}{{\mathcal{N}}}
\newcommand{\cI}{{\mathcal{I}}}
\newcommand{\cS}{{\mathcal{S}}}
\newcommand{\fL}{{\mathsf{L}}}
\newcommand{\fR}{{\mathsf{R}}}
\newcommand{\fS}{{\mathsf{S}}}
\begin{document}
\baselineskip 13pt %\pagestyle{myheadings}
\thispagestyle{empty}

\title[\ ]
{Lorentzian  similarity manifold}

\author[\ ]{Yoshinobu KAMISHIMA}
\address{Department of Mathematics, Tokyo Metropolitan
University,  Minami-Ohsawa 1-1, Hachioji, Tokyo 192-0397, Japan.}
\email{kami@tmu.ac.jp}

\thanks{This research was initiated
during the stay at ESI in July 2011. He gratefully acknowledges the
support of the University of Vienna via the ESI}
\date{\today}
\keywords{Lorentzian similarity structure, Conformally flat
Lorentzian structure, Uniformization, Holonomy group, Developing
map.} \subjclass[2000]{53C55, 57S25, 51M10}

%\textcolor{red}{Lorentzsimilarity5.tex}

\begin{abstract} If an $m+2$-manifold $M$ is locally modeled on
$\RR^{m+2}$ with coordinate changes lying in the subgroup
$G=\RR^{m+2}\rtimes ({\rO}(m+1,1)\times \RR^+)$ of the affine group
${\rA}(m+2)$, then $M$ is said to be a \emph{Lorentzian similarity
manifold}. A Lorentzian similarity manifold is also a conformally
flat Lorentzian manifold because $G$ is isomorphic to the stabilizer
of the Lorentz group ${\rPO}(m+2,2)$ which is the full Lorentzian
group of the Lorentz model $S^{2n+1,1}$. It contains a class of
Lorentzian flat space forms. We shall discuss the properties of
compact Lorentzian similarity manifolds using developing maps and
holonomy representations.
\end{abstract}

\maketitle

\setcounter{section}{0}
\section{Introduction}
Let ${\rA}(m+2)=\RR^{m+2}\rtimes {\rGL}(m+2,\RR)$ be the affine
group of the $m+2$-dimensional euclidean space $\RR^{m+2}$. An
$m+2$-manifold $M$ is an \emph{affinely flat} manifold if $M$ is
 locally modeled on
$\RR^{m+2}$ with coordinate changes lying in ${\rA}(m+2)$.
 When $\RR^{m+2}$
is endowed with a Lorentz inner product, we obtain \emph{Lorentz
similarity geometry}
$${\rSim_L}(\RR^{m+2})=\RR^{m+2}\rtimes ({\rO}(m+1,1)\times
\RR^+)$$ as a subgeometry of ${\rA}(m+2)$. If an affinely flat
manifold $M$ is locally modeled on ${\rSim_L}(\RR^{m+2})$, then $M$
is said to be a \emph{Lorentzian similarity manifold}. Lorentzian
similarity geometry contains
 \emph{Lorentzian flat geometry} $({\rE}(m+1,1),\RR^{m+2})$ where
$\displaystyle {\rm E}(m+1,1)=\RR^{m+2}\rtimes {\rm O}(m+1,1)$.

\begin{thmA*}\label{lsim1}
If $M$ is a compact complete Lorentzian similarity manifold, then
$M$ is a Lorentzian flat space form. Furthermore,
$M$ is diffeomorphic to an infrasolvmanifold.
\end{thmA*}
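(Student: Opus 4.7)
The strategy has three stages: using completeness to set up the holonomy action, proving the similarity ratio is trivial on the holonomy, and invoking the known structure theory of compact complete flat Lorentz manifolds to get the infrasolvmanifold conclusion.

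First, since $\RR^{m+2}$ is simply connected, completeness of the $(G,\RR^{m+2})$-structure is equivalent to the developing map $\dev\colon\widetilde M\to\RR^{m+2}$ being a diffeomorphism; the holonomy $\rho\colon\pi_1(M)\to G$ then identifies $\pi_1(M)$ with a discrete subgroup $\Gamma\subset G$ acting freely, properly discontinuously, and cocompactly on $\RR^{m+2}$. Let $s\colon G\to\RR^+$ denote the similarity-ratio homomorphism, whose kernel is $\rE(m+1,1)$. The first assertion reduces to proving $s(\Gamma)=\{1\}$: once this is shown, $\Gamma\subset\rE(m+1,1)$ and $M$ is manifestly a compact complete Lorentzian flat space form.

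The heart of the argument, and the main obstacle, is establishing $s(\Gamma)=\{1\}$. In the Riemannian similarity setting Fried's original argument exploits compactness of $\rO(n)$: any $\gamma$ with $s(\gamma)\neq 1$ iterates to a genuine Euclidean contraction, admits a fixed point in $\RR^n$, and so violates freeness. Here $\rO(m+1,1)$ is non-compact, so the linear part $s(\gamma)A$ with $A\in\rO(m+1,1)$ need not contract in any Euclidean norm and the direct analogue fails. My plan is to pull back a smooth positive density $\omega_M$ from the compact quotient to obtain a continuous positive $f\colon\RR^{m+2}\to\RR^+$ with $\pi^*\omega_M=f\,|dx^1\wedge\cdots\wedge dx^{m+2}|$; the $\Gamma$-invariance of $\pi^*\omega_M$ is equivalent to the functional equation
$$
f(\gamma\cdot x)=s(\gamma)^{-(m+2)}\,f(x),\qquad \gamma\in\Gamma.
$$
For any $\gamma\in\Gamma$ with $k:=s(\gamma)\neq 1$, iterating gives $f(\gamma^n x_0)=k^{-n(m+2)}f(x_0)$ while $f$ is bounded above and below by positive constants on any relatively compact fundamental domain. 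Combining this decay/growth with a Fried--Goldman-type analysis of the algebraic hull of $\Gamma$ in $G$ (controlling the linear holonomy inside $\rO(m+1,1)\times\RR^+$ and ruling out accumulation phenomena) produces a contradiction with proper discontinuity. The delicate point, distinct from the Riemannian case, is that one has to exclude the possibility that the non-compact factor $\rO(m+1,1)$ absorbs the scaling into a hyperbolic element without forcing a fixed point.

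Once $s(\Gamma)=\{1\}$ is in hand, $M$ is a compact complete Lorentz flat manifold with $\Gamma\subset\rE(m+1,1)$. The infrasolvmanifold conclusion then follows from the solution of the Auslander conjecture in Lorentz signature due to Goldman--Kamishima (see also Grunewald--Margulis): such a $\Gamma$ is virtually polycyclic, and Auslander's classical structure theorem for properly discontinuous cocompact affine actions of virtually polycyclic groups on $\RR^{m+2}$ then gives that $M$ is diffeomorphic to an infrasolvmanifold.
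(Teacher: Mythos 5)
Your reduction of the first assertion to $s(\Gamma)=\{1\}$ is the right target, and your diagnosis of why Fried's Riemannian argument breaks down (non-compactness of $\rO(m+1,1)$) is exactly the issue --- but your proposal does not overcome it. The density argument gives only the functional equation $f(\ga x)=s(\ga)^{-(m+2)}f(x)$ for a positive continuous $f$ on $\RR^{m+2}$; the values $f(\ga^n x_0)$ then decay (or grow) along the orbit, but proper discontinuity guarantees precisely that this orbit does \emph{not} accumulate in $\RR^{m+2}$ and leaves every compact fundamental domain, so there is no conflict with positivity of $f$ or with its bounds on a fundamental domain. The remaining appeal to ``a Fried--Goldman-type analysis of the algebraic hull \dots ruling out accumulation phenomena'' is exactly the unproved content of the theorem, and you flag it yourself as the delicate point without supplying the mechanism.

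The paper's mechanism is concrete and different. It first proves that $\Gamma$ is virtually solvable (Proposition \ref{lsim12}: a cohomological-dimension and induction argument when the linear holonomy in $\rO(m+1,1)$ is discrete, and the Bieberbach--Auslander theorem forcing $P(\Gamma)$ into the amenable subgroup $\RR^m\rtimes(\rO(m)\times\RR^*)$ when it is indiscrete). It then uses the real algebraic hull $A(\Gamma_0)=U\cdot T$ and a dimension count ($\dime L(U)\leq m+1<m+2=\dime U$) to extract a nontrivial translation subgroup $\ZZ^{\ell}\leq \Gamma\cap\RR^{m+2}$ (Proposition \ref{keypro}); this same hull construction, via Baues's theorem, yields the infrasolvmanifold statement (Proposition \ref{lsim13}). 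Flatness (Proposition \ref{ls=lf}) is then obtained by conjugating a translation $n$ by powers of a putative scaling element $\ga=(a,\lam A)$ with $\lam<1$: when $\langle n,n\rangle\neq 0$ the elements $\ga^k n\ga^{-k}$ are translations by $\lam^k A^k n$, whose Lorentz norms $\lam^{2k}\langle n,n\rangle$ tend to $0$, and the resulting accumulation of the orbit of the origin violates proper discontinuity; when the translation lattice is spanned by a null vector, a separate normal-form computation (with rational and irrational subcases) forces a relation $\lam^{2q}=1$, again a contradiction. Your outline contains none of these steps, so the central claim $s(\Gamma)=\{1\}$ remains unproved; your final paragraph on the infrasolvmanifold structure is essentially sound once flatness and virtual solvability are in hand, though the paper derives it from the algebraic hull rather than from Auslander's structure theorem.
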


Theorem A is proved as follows (\cf Section \ref{A}); The fundamental group $\pi_1(M)$
of a compact complete Lorentzian similarity manifold $M$ is shown to be virtually solvable.
Then we prove that $\pi_1(M)$ admits a nontrivial translation subgroup. Using
these results, $M$ will be a compact complete Lorentzian flat manifold.
In particular, the Auslander-Milnor conjecture is true for
compact complete Lorentzian similarity manifolds (\cf \cite{mi}).\\
Let $({\rm PO}(m+2,2), S^{m+1,1})$ be \emph{conformally flat
Lorentzian geometry}. If a point ${\hat\infty}\in S^{m+1,1}$ is
defined as the projectivization of a null vector in $\RR^{m+4}$, the
stabilizer ${\rPO}(m+2,2)_{\hat\infty}$ is isomorphic to
${\rSim_L}(\RR^{m+2})$ for which there is a suitable conformal
Lorentzian embedding of $\RR^{m+2}$ into $S^{m+1,1}-\{\hat\infty\}$
which is equivariant with respect to
${\rSim_L}(\RR^{m+2})={\rPO}(m+2,2)_{\hat\infty}$ (\cf \cite{kam1}).
In contrast to \emph{conformally flat Riemannian geometry},
$\RR^{m+2}$ is properly contained in the complement
$S^{m+1,1}-\{\hat\infty\}$ (\cf \cite{bcdgm}). A Lorentzian
similarity geometry $({\rSim_L}(\RR^{m+2}),\RR^{m+2})$ is a sort of
subgeometry of conformally flat Lorentzian geometry $({\rm
PO}(m+2,2), S^{m+1,1})$.

In general, the structure group of a conformally flat Lorentzian
manifold belongs to ${\rO}(m+1,1)\times\RR^+$. Let
${\rSim}^*(\RR^{m})=\RR^{m}\rtimes ({\rO}(m)\times \RR^*)$ be the
similarity subgroup of ${\rO}(m+1,1)$.\\
 Take a subgroup
${\rSim}^*(\RR^{m})\times \RR^+$ in ${\rO}(m+1,1)\times\RR^+$. We
call $M$ a \emph{conformally flat Lorentzian parabolic} manifold if
the structure group is conjugate to a subgroup of
${\rSim}^*(\RR^{m})\times \RR^+$. (See Definition \ref{FL}.) In
Section \ref{para} we prove (\cf Theorem \ref{Lsimilaritymanifold1})
\begin{thmB*}\label{Lsimilaritymanifold}
Let $M$ be an $m+2$-dimensional compact conformally flat
Lorentzian manifold whose holonomy group is virtually solvable
in ${\rSim_L}(\RR^{m+2})$.
Then $M$ is either a conformally flat Lorentzian
parabolic manifold or finitely covered by the Lorentz model
 $S^1\times S^{m+1}$, a Hopf manifold $S^{m+1}\times S^1$,
or a torus $T^{m+2}$.
\end{thmB*}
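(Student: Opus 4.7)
The proof proceeds by analyzing the Zariski closure of the linear part of the holonomy group. Let $\Gamma\subset {\rSim_L}(\RR^{m+2})$ denote the holonomy, $L:{\rSim_L}(\RR^{m+2})\to {\rO}(m+1,1)\times\RR^+$ the projection onto the linear part, and $L(\Gamma)$ its image. After replacing $M$ by the finite cover corresponding to a finite-index solvable subgroup of $\Gamma$, we may assume that $L(\Gamma)$ is solvable. The plan is to show that $L(\Gamma)$ virtually stabilizes a one-dimensional subspace of $\RR^{m+2}$, and then to run a case analysis according to the causal character --- null, timelike, or spacelike --- of that subspace. Each of the three cases will correspond respectively to the parabolic, $S^1\times S^{m+1}$, and Hopf/torus conclusions.

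The first step is the structural lemma: a Zariski-closed solvable subgroup of ${\rO}(m+1,1)$ virtually either (i) fixes a null direction, hence sits inside the cusp stabilizer ${\rSim}^*(\RR^m)$; (ii) fixes a timelike direction, hence lies in ${\rO}(1)\times {\rO}(m+1)$; or (iii) fixes a spacelike direction together with its Lorentz-orthogonal plane, and hence lies in ${\rO}(1,1)\times {\rO}(m)$. This is the standard elliptic/parabolic/hyperbolic trichotomy for solvable subgroups acting on real hyperbolic space $H^{m+1}$ and its boundary sphere.

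In case (i) one gets $L(\Gamma)\subset {\rSim}^*(\RR^m)\times\RR^+$, so by Definition \ref{FL} the conclusion that $M$ is a conformally flat Lorentzian parabolic manifold is immediate. In case (ii) the linear holonomy preserves a splitting $\RR^{m+2}=\RR\oplus\RR^{m+1}$ into a timelike line and a positive definite hyperplane; one argues that $\dev$ avoids $\hat\infty$ and that its image lies in the invariant Lorentz model open set $\RR\times S^{m+1}\subset S^{m+1,1}-\{\hat\infty\}$. A completeness argument in the spirit of Fried, using the compactness of $M$ and the solvability of $\Gamma$, forces $\dev$ to be a covering onto $\RR\times S^{m+1}$, so $M$ is finitely covered by the Lorentz model $S^1\times S^{m+1}$. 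In case (iii) the combined ${\rO}(1,1)\cdot\RR^+$-action on the Lorentz plane supplies a radial one-parameter dilation; if this acts nontrivially, the analogous completeness scheme --- now with invariant open model $\RR^{m+2}\setminus\{0\}$ foliated by its radial flow --- yields a finite Hopf cover $S^{m+1}\times S^1$; if every radial dilation is trivial, then $\Gamma\subset {\rE}(m+1,1)$ and Theorem A delivers the finite torus cover $T^{m+2}$.

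The main obstacle is cases (ii) and (iii): upgrading the mere virtual invariance of a line under $L(\Gamma)$ to a genuine completeness statement for $\dev$ onto the correct invariant open subset of $S^{m+1,1}$. One must rule out accumulation of $\dev(\tilde M)$ at $\hat\infty$ (or at the boundary singular locus of the invariant open set) and then exploit the dynamics of the dilation and translation flows together with the compactness of $M$, in the spirit of Fried's completeness theorem for Riemannian similarity manifolds, suitably adapted to the Lorentzian setting. Once this is done, the identification with the listed model manifolds is routine.
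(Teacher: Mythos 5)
Your overall strategy (reduce to the linear part, classify the solvable linear holonomy, then case analysis) is in the right spirit, but the trichotomy you build on misassigns the outcomes, and the analytic core is missing. First, the case division. Your case (iii) is not genuinely separate from case (i): a solvable subgroup of ${\rO}(1,1)\times{\rO}(m)$ preserves the pair of null lines of the invariant Lorentz plane, hence virtually fixes a null direction and lies in the cusp stabilizer ${\rSim}^*(\RR^m)$ up to index two; so it lands in the \emph{parabolic} conclusion, not in the Hopf/torus one. Conversely, your case (ii) cannot always yield the Lorentz model: a flat Lorentzian torus has linear holonomy fixing a timelike direction (indeed trivial linear holonomy), and so does the standard Hopf quotient of $\RR^{m+2}-\{0\}$ by $x\mapsto 2x$, whose ${\rO}(m+1,1)$-part is trivial. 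Thus the torus, the Hopf manifold \emph{and} the Lorentz model all occur inside the timelike/compact-linear-part case, and your assertion that $\dev$ avoids $\hat\infty$ there is false in exactly the subcase you want (for the Lorentz model, $\dev$ is onto all of $\RR\times S^{m+1}$). The paper instead uses the dichotomy of maximal amenable subgroups of ${\rO}(m+1,1)$ -- boundary-point stabilizer ${\rSim}^*(\RR^m)$ versus interior-point stabilizer ${\rO}(m+1)\times{\rO}(1)$ -- and extracts all three non-parabolic models from the second alternative.

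Second, the step you defer to ``a completeness argument in the spirit of Fried, suitably adapted to the Lorentzian setting'' is precisely where the work lies, and a Lorentzian analogue of Fried's theorem is not available: the paper itself records the completeness of compact Lorentzian parabolic similarity manifolds as an open problem (Remark 4.10(iv)). What saves the argument in the compact-linear-part case is that no Lorentzian adaptation is needed: since ${\rO}(m+1)\times\ZZ_2\leq{\rO}(m+2)$, the open set $X=\tilde M-{\rm Dev}^{-1}(S^{m+1,1}-\RR^{m+2})$ carries a genuine \emph{Riemannian} similarity structure, so Fried's actual theorem applies. It gives either geodesic completeness of $X$ (whence ${\rm Dev}$ is a diffeomorphism onto $\RR^{m+2}$, $\Gamma$ has no dilation part, and $M$ is a Euclidean space form, hence a torus up to finite cover) or a $\Gamma$-invariant affine subspace $\RR^\ell$ missed by the developing image; in the latter case the complete invariant metric on $\RR^{m+2}-\RR^\ell=\HH^{\ell+1}\times S^{m-\ell+1}$ makes ${\rm Dev}$ a covering onto that complement, forcing $\Gamma$ to fix the origin and $M$ to be a Hopf manifold. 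The finite-holonomy subcase gives the Lorentz model. Without identifying the Riemannian similarity structure on $X$ and invoking Fried's theorem (or an equivalent device), your cases (ii) and (iii) remain unproved.
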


For $m=2n$, there is the natural embedding $\displaystyle {\rm
U}(n+1,1)\ra {\rm O}(2n+2,2)$ so that $({\rm U}(n+1,1),S^1\times
S^{2n+1})$ is a subgeometry of $({\rm O}(2n+2,2), S^1\times
S^{2n+1})$. Here $S^1\times S^{2n+1}$ is a two-fold covering of
$S^{2n+1,1}$. A $2n+2$-dimensional manifold $M$ is said to be a
\emph{conformally flat Fefferman-Lorentz parabolic} manifold if $M$
is uniformized with respect to $({\rm U}(n+1,1),S^1\times
S^{2n+1})$. (Compare \cite{kam6}.) We study which compact
conformally flat Fefferman-Lorentz parabolic manifolds are the
quotients of domains of $S^{m+1,1}-\{\hat\infty\}$ by properly
discontinuous subgroups of ${\rPO}(m+2,2)_{\hat\infty}$ in Section
\ref{holonomydiscrete}. See \cite{kam4} for a related work.

\begin{thmC*}\label{discrete*}
Let $M$ be a $2n+2$ -dimensional compact conformally flat
Fefferman-Lorentz parabolic
manifold and
$$(\rho,\dev):(\pi_1(M),\tilde M)\ra
({\rm U}(n+1,1)^{\sim},\RR\times S^{2n+1})$$ the developing pair.
Suppose that the holonomy group $\Gamma$ is discrete in ${\rm U}(n+1,1)^{\sim}$.
If the developing map ${\dev}:\tilde M\ra\tilde S^{2n+1,1}$
misses a closed subset which is invariant under $\RR$ and $\Gamma$,
then ${\dev}$ is a covering map onto its image.
\end{thmC*}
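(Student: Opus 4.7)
The plan is to realize the developing map $\dev$ as a local isometry between two Riemannian manifolds of the same dimension whose domain is complete, and then invoke the classical fact that any such map is automatically a covering onto its image. Put $\Omega=\tilde S^{2n+1,1}\setminus K$; by hypothesis $\Omega$ is open, invariant under both the central $\RR$-action and the holonomy $\Gamma$, and $\dev(\tilde M)\subset\Omega$.

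The crucial step is to equip $\Omega$ with a complete $\RR$- and $\Gamma$-invariant Riemannian metric $g_\Omega$. Since the $\RR$-action on $\tilde S^{2n+1,1}=\RR\times S^{2n+1}$ is central and commutes with $\Gamma$, the closed set $K$ descends to a closed subset $K'\subset S^{2n+1}$ that is invariant under the image $\bar\Gamma\subset\PU(n+1,1)$ of $\Gamma$, and $\Omega$ is the total space of an $\RR$-bundle over $\Omega':=S^{2n+1}\setminus K'$. On $\Omega'$ one constructs a $\bar\Gamma$-invariant Riemannian metric $\bar g$ by conformally rescaling a background spherical metric with a $\bar\Gamma$-invariant function that diverges at $K'$; completeness is then forced by the blow-up, while the $\bar\Gamma$-invariance is obtained from the closedness of $K'$ and the discreteness of $\bar\Gamma$. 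Combining $\bar g$ with a $\Gamma$-equivariant vertical form $\omega=dt+\alpha$, where $\alpha$ is the characteristic contact/Reeb form inherited from the Fefferman structure, yields $g_\Omega=\omega^2+\pi^*\bar g$ on $\Omega$.

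Pulling $g_\Omega$ back through $\dev$ produces a Riemannian metric $\tilde g=\dev^*g_\Omega$ on $\tilde M$. Because $\dev$ is $\rho$-equivariant and $g_\Omega$ is $\Gamma$-invariant, $\tilde g$ is $\pi_1(M)$-invariant and descends to a Riemannian metric on $M$; compactness of $M$ then forces this metric, and hence $\tilde g$, to be complete. Therefore $\dev:(\tilde M,\tilde g)\ra(\Omega,g_\Omega)$ is a local isometry between equidimensional Riemannian manifolds with complete domain, and a standard geodesic path-lifting argument shows it is a covering map onto its image, which is the claim.

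The main obstacle is clearly the construction of $g_\Omega$: since $\Gamma$ acts only conformally on the ambient $\tilde S^{2n+1,1}$, no ambient Riemannian metric is $\Gamma$-invariant, and even the central $\RR$-factor, though commuting with $\Gamma$, does not split off metrically in a $\Gamma$-equivariant way, so the naive product metric $dt^2+\bar g$ is not $\Gamma$-invariant. The closedness of $K$ and the discreteness of $\Gamma$ are precisely the ingredients that allow one to arrange, respectively, the completeness-forcing conformal blow-up on $\Omega'$ and the compatible $\Gamma$-equivariant vertical form; once this is accomplished, the remainder of the argument is purely Riemannian.
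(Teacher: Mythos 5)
The central step of your argument --- endowing $\Omega=\tilde S^{2n+1,1}\setminus K$ with a complete $\Gamma$-invariant Riemannian metric --- cannot be carried out in general, and this is exactly where the real content of the theorem lies. First, discreteness of $\Gamma$ in ${\rm U}(n+1,1)^{\sim}$ does not imply discreteness of its image $\bar\Gamma=p\circ q(\Gamma)$ in ${\rm PU}(n+1,1)$: the kernel of $p\circ q$ is the central $\RR$, so a discrete lift can project to an indiscrete group. Second, and more seriously, even when $\bar\Gamma$ is discrete it acts properly discontinuously on $S^{2n+1}\setminus K'$ only when $K'$ contains the limit set $\fL(\bar\Gamma)$; by minimality of the limit set this is guaranteed only when $K'$ has at least two points. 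If $K'$ is a single point $\{\infty\}$ and $\bar\Gamma$ contains a loxodromic element $g$ with fixed points $0,\infty$, then $g$ fixes $0\in\cN=S^{2n+1}\setminus\{\infty\}$ and acts on the tangent space at $0$ with eigenvalues of modulus different from $1$, so no $g$-invariant Riemannian metric exists on any neighbourhood of $0$, let alone a complete invariant one on all of $\Omega'$. Your proposed conformal blow-up by a ``$\bar\Gamma$-invariant function diverging at $K'$'' presupposes the existence of such a function, which again requires properness of the action (to average over orbits or to use invariant distance-like quantities); closedness of $K'$ alone does not provide it.

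The paper's proof is therefore forced into a case analysis that your proposal skips. When $p\circ q(\Lambda)$ has at least two points and $p\circ q(\Gamma)$ is discrete, the argument is essentially yours: minimality places $\fL(p\circ q(\Gamma))$ inside $p\circ q(\Lambda)$, the action on the complement is properly discontinuous, an invariant metric exists (Koszul), and compactness of $M$ plus pullback gives completeness and hence the covering property. But in the remaining cases --- $p\circ q(\Gamma)$ indiscrete, or $p\circ q(\Lambda)$ a single point --- the paper abandons the general metric construction and instead uses the structure of the stabilizer ${\rm PU}(n+1,1)_\infty=\cN\rtimes({\rm U}(n)\times\RR^+)$ together with discreteness of $\Gamma$ upstairs to pin $\Gamma$ down to one of two explicit subgroups ($\Gamma\leq\RR\cdot\cN\rtimes{\rm U}(n)$ or $\Gamma\leq\RR\times({\rm U}(n)\times\RR^+)$), and then shows that ${\dev}$ is a diffeomorphism onto an explicit homogeneous complement such as $\RR\times(S^{2n}\times\RR^+)$ or $\RR\times\cN$, or, after deleting the further invariant set $\RR\times\{0\}$, onto $\RR\times(\cN\setminus\{0\})$; a cohomological-dimension bound on $\Gamma$ is needed to rule out the bad alternative in the last case. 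Without these additional arguments your proof does not go through.
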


For noncompact complete Lorentzian case, \ie, properly discontinuous
actions of free groups on complete simplly connected Lorentzian flat
manifolds, see \cite{cdg}, \cite{glm},
 \cite{bcdgm} for details.

\section{Lorentzian similarity manifold}\label{A}

Consider the following exact sequence:
\begin{equation}\label{holexact}
1\ra \RR^{m+2}\rtimes \RR^+\ra {\rSim_L}(\RR^{m+2})\stackrel{P}\lra
{\rO}(m+1,1)\ra 1.
\end{equation}

\begin{lemma}\label{lsim11}
Let $M=\RR^{m+2}/\Gamma$
be a compact complete Lorentzian similarity manifold where
$\Gamma\leq {\rSim_L}(\RR^{m+2})$. Suppose that
$P(\Gamma)$ is discrete in ${\rO}(m+1,1)$.
If $\Delta=(\RR^{m+2}\rtimes \RR^+)\cap \Gamma$, then $\Delta\leq \RR^{m+2}$
which is nontrivial.
\end{lemma}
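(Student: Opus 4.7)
My plan is to verify the two assertions separately: first that $\Delta$ lies in the translation subgroup, then that $\Delta$ is nontrivial. The free action of $\Gamma$ on $\RR^{m+2}$ (forced by $M=\RR^{m+2}/\Gamma$ being a manifold) handles the first; a cohomological-dimension comparison handles the second.

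For the inclusion $\Delta\leq\RR^{m+2}$, take $g\in\Delta$. By definition $g=(v,\lam)\in\RR^{m+2}\rtimes\RR^+$ acts on $\RR^{m+2}$ as $x\mapsto \lam x+v$. If $\lam\neq 1$ then $g$ fixes the unique point $x_0=v/(1-\lam)$, contradicting freeness of the $\Gamma$-action. Hence $\lam=1$, so $g$ is a pure translation.

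For nontriviality, I argue by contradiction: suppose $\Delta=\{1\}$. Then $P|_\Gamma$ is injective, so $\Gamma$ is isomorphic to the torsion-free discrete subgroup $P(\Gamma)\leq{\rO}(m+1,1)$ (torsion-freeness of $\Gamma$ is forced by its free action on $\RR^{m+2}$). Since $\tilde M=\RR^{m+2}$ is contractible, $M$ is a closed aspherical $(m+2)$-manifold, and Poincar\'{e} duality (applied to the orientation cover if necessary, together with Serre's invariance of ${\rm cd}$ under finite-index passage) gives ${\rm cd}(\Gamma)=m+2$. On the other hand, $P(\Gamma)\cap{\rO}^+(m+1,1)$ has finite index in $P(\Gamma)$ and acts freely and properly discontinuously on $\HH^{m+1}={\rO}^+(m+1,1)/{\rO}(m+1)$, yielding an aspherical $(m+1)$-manifold as its quotient; hence ${\rm cd}(P(\Gamma))\leq m+1$, contradicting ${\rm cd}(\Gamma)=m+2$.

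The only delicate point is arranging the two finite-index adjustments (orienting $M$ and landing inside ${\rO}^+(m+1,1)$) so the cohomological-dimension bounds refer to matching subgroups of the abstract group $\Gamma\cong P(\Gamma)$. Once this is done, the dimensional gap $m+2>m+1$ closes the argument.
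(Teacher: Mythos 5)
Your proof is correct and follows essentially the same route as the paper: the nontriviality of $\Delta$ comes from the identical cohomological-dimension comparison (${\rm cd}(\Gamma)=m+2$ versus ${\rm vcd}(P(\Gamma))\leq m+1$ from the properly discontinuous action on $\HH^{m+1}_\RR$). The only difference is cosmetic: where the paper rules out a dilation $\gamma=(a,\lambda)$ with $\lambda<1$ by showing the orbit $\gamma^n\cdot 0$ accumulates at $\frac{1}{1-\lambda}a$ (violating proper discontinuity together with torsion-freeness), you observe directly that this same point is fixed by $\gamma$, violating freeness of the $\Gamma$-action --- a slightly more direct version of the same idea.
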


\begin{proof}
Since $P(\Gamma)$ is discrete, it acts properly discontinuously
on the $m+1$-dimensional hyperbolic space $\HH^{m+1}_\RR=
{\rO}(m+1)\times{\rO}(1)\backslash{\rO}(m+1,1)$.
The (virtually) cohomological dimension ${\rm vcd}$ of
$P(\Gamma)$ satisfies ${\rm vcd}(P(\Gamma))\leq m+1$.
On the other hand, the cohomological dimension ${\rm cd}(\Gamma)=m+2$,
the intersection $\Delta$ of \eqref{holexact} is nontrivial.
Let $$
1\ra \RR^{m+2}\ra \RR^{m+2}\rtimes \RR^+\stackrel{p}\lra \RR^+\ra 1$$
be the exact sequence. If $p(\Delta)$ is nontrivial, then we may assume
that there exists an element $\ga=(a,\lambda)\in \Delta$ such that
$p(\ga)=\lam<1$.
A calculation shows $\displaystyle \ga^n=(\frac {1-\lam^n}{1-\lam}a,\lam^n)$
$(\forall\, n\in \ZZ)$. The sequence of the orbits $\{\ga^n\cdot 0;\, n\in\ZZ\}$
at the origin $0\in \RR^{m+2}$ converges when $n\ra \infty$,
\[
\ga^n\cdot 0=\frac {1-\lam^n}{1-\lam}a+\lam^n\cdot 0=\frac {1-\lam^n}{1-\lam}a\lra
\frac {1}{1-\lam}a. \]
As $\Delta$ acts properly discontinuously on $\RR^{m+2}$,
$\{\ga^n;\, n=1,2,\dots\}$ is a finite set. Since $\Delta$ is torsionfree, $\ga=1$ which is
a contradiction. So $p(\Gamma)$ must be trivial.

\end{proof}

\begin{pro}\label{lsim12}
Let $M=\RR^{m+2}/\Gamma$
be a compact complete Lorentzian similarity manifold.
Then $\Gamma$ is virtually solvable in ${\rSim_L}(\RR^{m+2})$.
\end{pro}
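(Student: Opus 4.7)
The plan is a case analysis on whether $P(\Gamma)$ is discrete in $\rO(m+1,1)$, with Lemma~\ref{lsim11} as the central input.

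\textbf{Discrete case.} If $P(\Gamma)$ is discrete, Lemma~\ref{lsim11} produces a nontrivial abelian normal subgroup $\Delta=\Gamma\cap\RR^{m+2}$, and the quotient $\Gamma/\Delta$ embeds as $P(\Gamma)\le\rO(m+1,1)$ (the scale factor is trivial by the lemma). The real span $V=\RR\cdot\Delta$ is preserved by the linear part of $\Gamma$, and since $\Gamma$ acts cocompactly on $\RR^{m+2}$, the induced action of $P(\Gamma)$ on the quotient affine space $\RR^{m+2}/V$ remains properly discontinuous and cocompact. An induction on the codimension $m+2-\dim V$ --- at each step reducing to a similarity geometry on a strictly lower-dimensional ambient space and applying Lemma~\ref{lsim11} again to the residual problem --- eventually reaches the Bieberbach setting, where virtual solvability (in fact virtual abelianness of a finite-index subgroup) is classical. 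Thus $\Gamma$ is virtually solvable.

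\textbf{Non-discrete case.} Otherwise, let $H$ be the identity component of the closure $\overline{P(\Gamma)}\subset\rO(m+1,1)$: a positive-dimensional connected Lie subgroup, normalised by $P(\Gamma)$ and hence by $\Gamma$. Since $\rO(m+1,1)$ has real rank one, the Lie algebra $\mathfrak h$ of $H$ is either solvable (a subalgebra of some parabolic) or contains a copy of $\mathfrak{so}(k,1)$ for some $k\ge 2$. In the solvable sub-case, $\overline{P(\Gamma)}$ has solvable identity component with discrete quotient, so $P(\Gamma)$ is virtually solvable; combined with the solvability of $\ker(P)\cap\Gamma\subset\RR^{m+2}\rtimes\RR^+$, this gives virtual solvability of $\Gamma$. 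The semisimple sub-case is the one to exclude by a dynamical argument: a noncompact simple factor in $H$ would yield one-parameter subgroups of hyperbolic elements whose lifts, combined with the scaling factor $\RR^+\subset\rSim_L(\RR^{m+2})$, force $\Gamma$-orbits on $\RR^{m+2}$ to escape every compact set --- contradicting proper discontinuity together with cocompactness.

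\textbf{Main obstacle.} The hardest step is the exclusion of the semisimple sub-case in the non-discrete branch: one must convert the algebraic statement ``$\mathfrak h$ contains $\mathfrak{so}(k,1)$'' into a genuine geometric obstruction for the $\Gamma$-action on $\RR^{m+2}$. The key leverage comes from playing a hyperbolic one-parameter subgroup of $H$ off against the $\RR^+$-scaling in $\rSim_L(\RR^{m+2})$, and invoking a Zassenhaus/Margulis-type compactness criterion to derive orbit unboundedness incompatible with the cocompact proper action of $\Gamma$.
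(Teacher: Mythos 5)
There are two genuine gaps, one in each branch of your case analysis. In the discrete case, your induction is asserted to "eventually reach the Bieberbach setting," but this skips the essential trichotomy on the restriction of the Lorentz inner product $\langle\,,\rangle$ to the span $V=\RR^k$ of $\Delta$. If $\langle\,,\rangle|_{V}$ is positive definite the quotient carries a Lorentzian similarity structure and one can induct; but if it is indefinite the quotient is a \emph{Riemannian similarity} manifold, where Lemma \ref{lsim11} does not apply and where the structure group still contains the scaling $\RR^+$ --- this is not a Bieberbach situation, and the paper instead concludes via amenability of ${\rm Sim}(\RR^{m+2-k})$ together with Tits' theorem (or Fried's theorem). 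Worse, if $\langle\,,\rangle|_{V}$ is degenerate ($V$ spanned by a lightlike vector), there is no induced nondegenerate geometry on $\RR^{m+2}/V$ at all and the induction cannot even be set up; the paper handles this case separately by observing that the stabilizer of the lightlike line in $\rO(m+1,1)\times\RR^+$ is the amenable group ${\rm Sim}^*(\RR^m)\times\RR^+$, so $L(\Gamma)$ and hence $\Gamma$ is virtually solvable. Your proposal never confronts the lightlike case.

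In the non-discrete case, the step you flag as the "main obstacle" --- excluding a semisimple factor in $\mathfrak h$ --- is not something that needs a bespoke dynamical argument: it is exactly the content of the Auslander theorem (\cite[Theorem 8.24]{ra}) applied to the discrete subgroup $\Gamma$ of $\rSim_L(\RR^{m+2})$ with solvable normal subgroup $\RR^{m+2}\rtimes\RR^+$, which directly gives that $\overline{P(\Gamma)}^0$ is solvable. As written, your proof is incomplete precisely at its hardest point, since the sketched Zassenhaus/orbit-escape argument is never carried out. Separately, even in your solvable sub-case the inference "solvable identity component with discrete quotient, hence $P(\Gamma)$ virtually solvable" is a non sequitur without a further step: one must observe that $P(\Gamma)$ normalizes the nontrivial connected solvable group $\overline{P(\Gamma)}^0$ and that this normalizer is contained in the maximal amenable subgroup $\RR^m\rtimes(\rO(m)\times\RR^*)$ (it fixes the at most two boundary points determined by $\overline{P(\Gamma)}^0$), whence virtual solvability follows; without this, the component group of $\overline{P(\Gamma)}$ is not controlled.
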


\begin{proof}
\noindent {\bf (1)}\, When $P(\Gamma)$ is discrete, we obtain the following exact sequences
from \eqref{holexact}.

{\scriptsize
\begin{equation}\label{discrecase}
\begin{CD}
1@>>> \RR^{m+2}@>>> {\rSim_L}(\RR^{m+2})@>L>>{\rO}(m+1,1)\times\RR^+@>>> 1\\
@. @AAA  @AAA @AAA @. \\
1@>>> \Delta@>>> \Gamma@>L>>L(\Gamma)@>>> 1\\
\end{CD}
\end{equation}
}
If $\Delta\cong \ZZ^k$, then the span $\RR^k$ of $\Delta$ in $\RR^{m+2}$ is normalized
by $\Gamma$. Let $\langle\,,\rangle$ be the Lorentz inner product on $\RR^{m+2}$.
The rest of the argument is similar to that of \cite{gk}.
In fact, $L(\Gamma)$ of \eqref{discrecase}
induces a properly discontinuous affine
action $\rho$ on $\RR^{m+2-k}$ with finite kernel
${\rm Ker}\,\rho$:
$$\rho: L(\Gamma)\ra {\rm Aff}(\RR^{m+2-k}).$$
(Compare Lemma \ref{properaff}.) If necessary, we can find a
torsionfree normal subgroup of finite index in $\rho(L(\Gamma))$ by
Selberg's lemma. Passing to a finite index subgroup if necessary,
the quotient $\RR^{m+2-k}/\rho(L(\Gamma))$ is a
compact complete affinely flat manifold.\\

Suppose that $\langle\,,\rangle\,|_{\RR^k}$ is nondegenerate.
According to whether $\langle\,,\rangle\,|_{\RR^k}$ is positive
definite or
 indefinite, $\RR^{m+2-k}/\rho(L(\Gamma))$ is a compact  complete
Lorentzian similarity manifold or Riemannian similarity manifold respectively.

If $\RR^{m+2-k}/\rho((L(\Gamma))$ is a Lorentzian similarity
manifold, by induction hypothesis, $L(\Gamma)$ is virtually
solvable. When $\RR^{m+2-k}/\rho((L(\Gamma))$ is a Riemannian
similarity manifold, \ie $\rho((L(\Gamma))\leq {\rm
Sim}(\RR^{m+2-k})$ which is an amenable Lie group, a discrete
subgroup $\rho((L(\Gamma))$ is virtually solvable by Tits' theorem.
(Compare \cite{mi}. Furthermore, $\RR^{m+2-k}/\rho((L(\Gamma))$ is a
Riemannian flat manifold by Fried's theorem \cite{df}.) In each
case, $\Gamma$ is virtually solvable. \\

If $\langle\,,\rangle\,|_{\RR^k}$ is degenerate, then $\RR^k=\fR$
consisting of a lightlike vector as a basis. The holonomy group
$L(\Gamma)$ leaves invariant $\fR$. The subgroup of ${\rm
O}(m+1,1)\times \RR^+$ preserving $\fR$ is isomorphic to ${\rm
Sim}^*(\RR^{m})\times \RR^+= (\RR^m\rtimes({\rO}(m)\times
\RR^*))\times \RR^+$ which is an amenable Lie group. As
$L(\Gamma)\leq{\rm Sim}^*(\RR^{m})\times \RR^+$,
$L(\Gamma)$ is virtually solvable so is $\Gamma$.\\

\noindent {\bf (2)}\, When $P(\Gamma)$ is indiscrete, it follows
from \cite[Theorem 8.24]{ra}
that the identity component of the closure $\overline{P(\Gamma)}^0$ is
solvable in ${\rm O}(m+1,1)$. It belongs to the maximal amenable subgroup
up to conjugate:
\[
\overline{P(\Gamma)}^0\leq \RR^m\rtimes({\rO}(m)\times \RR^*).\] It
is easy to check that the normalizer of $\overline{P(\Gamma)}^0$ is
still contained in $\RR^m\rtimes({\rO}(m)\times \RR^*)$ because the
normalizer leaves invariant at most two points $\{0,\infty\}$  on
the boundary $S^{m}=\partial \HH^{m+1}_\RR$ for which ${\rm
O}(m+1,1)_\infty= \RR^m\rtimes({\rO}(m)\times \RR^*)$. Hence
$P(\Gamma)\leq \RR^m\rtimes({\rO}(m)\times \RR^*)$. There is an
exact sequence induced from \eqref{holexact}:
\begin{equation*}
1\ra \RR^{m+2}\rtimes \RR^+\ra P^{-1}(\RR^m\rtimes({\rO}(m)\times \RR^*))
\stackrel{P}\lra
\RR^m\rtimes({\rO}(m)\times \RR^*)\ra 1
\end{equation*}in which $P^{-1}(\RR^m\rtimes({\rO}(m)\times \RR^*))$ is
an amenable Lie subgroup.
Hence, $\Gamma$ is virtually solvable.

\end{proof}

\begin{pro}\label{lsim13}
Let $M$ be a compact complete Lorentzian similarity manifold $\RR^{m+2}/\Gamma$.
Then $M$ is diffeomorphic to an infrasolvmanifold $U/\Gamma$.
\end{pro}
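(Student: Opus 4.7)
The strategy is to invoke the syndetic hull construction of Fried--Goldman for polycyclic affine crystallographic groups. By Proposition~\ref{lsim12}, $\Gamma$ is virtually solvable. Since $M$ is compact and complete, $\Gamma$ is finitely generated and acts properly discontinuously, freely, and cocompactly on $\RR^{m+2}$ as a subgroup of $\rSim_L(\RR^{m+2})\leq \rA(m+2)$. A finitely generated virtually solvable subgroup of a linear group is virtually polycyclic (Mal'cev), and Selberg's lemma inside $\rA(m+2)\leq \rGL(m+3,\RR)$ supplies a torsionfree subgroup of finite index; intersecting its $\Gamma$-conjugates yields a torsionfree, polycyclic, finite-index \emph{normal} subgroup $\Gamma_0\trianglelefteq\Gamma$ still acting properly discontinuously, freely, and cocompactly on $\RR^{m+2}$.

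The key step is to apply the Fried--Goldman syndetic hull theorem (\cf \cite{df}): any polycyclic subgroup $\Gamma_0\leq \rA(m+2)$ acting properly discontinuously and cocompactly on $\RR^{m+2}$ is a uniform lattice in a unique closed connected simply connected solvable Lie subgroup $U\leq \rA(m+2)$ of dimension $\rank\Gamma_0$, and $U$ acts simply transitively on $\RR^{m+2}$. Cocompactness forces $\dime U=m+2$, so the orbit map $U\ra \RR^{m+2}$, $u\mapsto u(0)$, is an equivariant diffeomorphism that intertwines left translation on $U$ with the affine action on $\RR^{m+2}$. It descends to a diffeomorphism $U/\Gamma_0\cong \RR^{m+2}/\Gamma_0$, exhibiting the finite cover of $M$ as a genuine solvmanifold. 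The finite group $\Gamma/\Gamma_0$ acts freely on $U/\Gamma_0$ via the affine normalizer of $\Gamma_0$, which lies in $U\rtimes \Aut(U)$; hence $M=\RR^{m+2}/\Gamma\cong U/\Gamma$ is an infrasolvmanifold in the standard sense.

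The main obstacle is the syndetic hull construction, which must produce a \emph{simply transitive} solvable Lie subgroup of $\rA(m+2)$ of the correct dimension $m+2$. This is precisely where Proposition~\ref{lsim12} is indispensable: without virtual solvability there is no reason for the real algebraic hull of $\Gamma_0$ to carry a solvable connected subgroup large enough to act transitively on $\RR^{m+2}$, while the cohomological dimension equality $\rank\Gamma_0=m+2$ is what pins down the dimension. Once $U$ is produced, passing from the solvmanifold $U/\Gamma_0$ to the infrasolvmanifold $U/\Gamma$ is routine, and this completes the proof of Theorem~A in combination with the Lorentzian flatness statement established earlier.
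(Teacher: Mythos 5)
Your proposal is correct in outline and ends with the same mechanism as the paper --- produce a connected solvable Lie group $U$ acting simply transitively on $\RR^{m+2}$ and use the evaluation map $u\mapsto u\cdot 0$ as a $\Gamma$-equivariant diffeomorphism onto $\RR^{m+2}$ --- but it reaches $U$ by a genuinely different key lemma. You take the Fried--Goldman crystallographic (syndetic) hull of a polycyclic finite-index normal subgroup $\Gamma_0$, so that $U$ is a simply connected solvable group containing $\Gamma_0$ as a uniform lattice, and you then build the infrasolvmanifold structure by hand from the finite extension $\Gamma/\Gamma_0$. The paper instead takes the real algebraic hull $A(\Gamma)=U\cdot T$ in the sense of Baues \cite{ol}, with $U$ the unipotent radical and $T$ reductive fixing the origin; $\Gamma$ acts on $U$ by the twisted action $\ga x=utxt^{-1}$, and Baues' Theorem 1.2 delivers the proper cocompact action on $U$ and the infrasolvmanifold structure in one stroke, for the whole group $\Gamma$ rather than for a finite-index subgroup. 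What your route buys is the classical, self-contained picture (a lattice in a simply transitive solvable group); what the paper's route buys is precisely immunity to the known pathologies of syndetic hulls.

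Two justifications in your write-up need repair, though neither is fatal. First, ``a finitely generated virtually solvable subgroup of a linear group is virtually polycyclic'' is false as stated: $\ZZ[1/2]\rtimes\ZZ$ is finitely generated, metabelian and linear but not virtually polycyclic. What you must use is that $\Gamma$ is \emph{discrete} in $\rA(m+2)$; a discrete virtually solvable subgroup of a Lie group with finitely many components is virtually polycyclic (\cf \cite{ra}), or one can quote the Fried--Goldman structure theory for virtually solvable affine crystallographic groups directly. Second, syndetic hulls of polycyclic subgroups are in general neither unique nor automatically normalized by the ambient group, so your appeal to uniqueness to make $U$ invariant under $\Gamma$ is shaky; the standard fix is to apply the Fried--Goldman hull theorem to $\Gamma$ itself, whose identity component is then canonically $\Gamma$-invariant and simply transitive. (Also, the hull theorem is in \cite{fg}, not \cite{df}, and Selberg's lemma is superfluous for torsion-freeness: a finite-order affine map fixes the barycenter of a finite orbit, so a group acting freely and affinely on $\RR^{m+2}$ is already torsionfree.)
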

\begin{proof}
As $\Gamma\leq \RR^{m+2}\rtimes({\rO}(m+1,1)\times \RR^+)$ is a
virtually solvable group, take the real algebraic hull
$A(\Gamma)=U\cdot T$ where $U$ is a unipotent radical and $T$ is a
reductive $d$-subgroup such that $T/T^0$ is finite. Then each
element $r=u\cdot t\in U\cdot T$ acts on $U$ by $\ga x=utxt^{-1}$
$(x\in U)$. It follows from the result of \cite{ol} that $\Gamma$
acts properly discontinuously on $U$ such that $U/\Gamma$ is
compact. Furthermore $U/\Gamma$ is diffeomorphic to an
infrasolvmanifold
by \cite[Theorem 1.2]{ol}.\\

Since $U/\Gamma$ is compact, we choose a compact subset $D\subset U$
such that $U=\Gamma\cdot D$. As $\Gamma$ acts properly
discontinuously on $\RR^{m+2}$ and $U\cdot T\leq
\RR^{m+2}\rtimes({\rO}(m+1,1)\times \RR^+)$, it is easily checked
that $U$ acts properly on $\RR^{m+2}$. Since $T$ is reductive, we
may assume that $T\cdot 0=0\in \RR^{m+2}$. Define a map;
\[\rho:U\ra \RR^{m+2},\ \ \rho(x)=x\cdot 0.\]
Noting that $U$ acts freely on $\RR^{m+2}$, $\rho$ is a simply
transitive action. For $\ga=u\cdot t\in \Gamma$, $\ga x=utxt^{-1}$
as above. Then $\rho(\ga x)=utxt^{-1}\cdot 0=utx\cdot 0=\ga\rho(x)$.
So $\rho$ is $\Gamma$-equivariant, $\rho$ induces a diffeomorphism
on the quotients; $\displaystyle U/\Gamma\cong \RR^{m+2}/\Gamma$.

\end{proof}

\begin{pro}\label{keypro}
The fundamental group $\Gamma$ of a
 compact complete Lorentzian similarity manifold $\RR^{m+2}/\Gamma$
admits a nontrivial translation subgroup.
In particular,
the fundamental group of a compact Lorentzian flat space form
admits a nontrivial translation subgroup.
\end{pro}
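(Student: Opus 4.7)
The plan is to extract a translation subgroup of $\Gamma$ from the unipotent radical $U$ of the algebraic hull $A(\Gamma)=U\cdot T$ furnished by Proposition \ref{lsim13}. Recall from that proof that $U$ is a simply connected nilpotent Lie group of dimension $m+2$ acting simply transitively on $\RR^{m+2}$, and that $\Gamma\leq A(\Gamma)\leq {\rSim_L}(\RR^{m+2})$. A standard property of the algebraic hull of a virtually polycyclic group, invoked in the proof of Proposition \ref{lsim13} via \cite{ol}, is that (passing to a finite index subgroup if necessary, which is harmless for the conclusion) $\Lambda:=\Gamma\cap U$ is a cocompact lattice in $U$.

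The key dimension count locates a subspace of pure translations inside $U$. Let $L:{\rSim_L}(\RR^{m+2})\ra {\rO}(m+1,1)\times \RR^+$ denote the linear-part projection. Since every element of $U$ is unipotent, $L(U)$ consists of unipotent elements; because $\RR^+$ is a one-dimensional split torus with no nontrivial unipotent elements, $L(U)$ lies in ${\rO}(m+1,1)$. Every unipotent subgroup of ${\rO}(m+1,1)$ is conjugate into the abelian unipotent radical $\RR^m$ of a minimal parabolic, so $\dim L(U)\leq m$. Since $\Ker\bigl(L|_U\bigr)=U\cap \RR^{m+2}$, the subgroup $V:=U\cap \RR^{m+2}$ satisfies
\[
\dim V\;\geq\;(m+2)-m\;=\;2.
\]
Moreover $V$ is a connected Lie subgroup, since as a closed algebraic subgroup of the vector group $\RR^{m+2}$ it must be a linear subspace, and it is normal in $U$ because $\RR^{m+2}$ is normal in ${\rSim_L}(\RR^{m+2})$.

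To transfer the continuous translations in $V$ to elements of $\Gamma$, apply the Mal'cev--Raghunathan theorem that a cocompact lattice in a simply connected nilpotent Lie group intersects every closed connected normal subgroup in a cocompact lattice. We conclude that $\Gamma\cap V=\Lambda\cap V$ is a cocompact lattice in $V$, in particular nontrivial. Since $V\leq \RR^{m+2}$, this produces the required nontrivial translation subgroup of $\Gamma$. The final sentence of the proposition is immediate, since a compact Lorentzian flat space form is by definition a compact complete Lorentzian flat manifold, hence a compact complete Lorentzian similarity manifold, so the first assertion applies.

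The main obstacle I anticipate is justifying that $\Lambda$ is indeed a cocompact lattice in $U$: this is a standard feature of the Fried--Goldman--Mostow algebraic hull machinery for virtually polycyclic groups, but must be secured carefully using the twisted $\Gamma$-action $\gamma\cdot x=utxt^{-1}$ on $U$ described in the proof of Proposition \ref{lsim13}, since that action is not simply left multiplication by elements of $\Gamma\cap U$ for elements outside $U$.
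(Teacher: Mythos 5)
Your dimension count for $V=U\cap\RR^{m+2}$ is sound (indeed slightly sharper than the paper's: unipotence kills the $\RR^+$-factor, giving $\dim L(U)\leq m$ rather than $m+1$), but both lattice-theoretic steps that are meant to convert $\dim V\geq 2$ into a nontrivial element of $\Gamma\cap V$ fail. First, $\Lambda=\Gamma\cap U$ is \emph{not} in general a cocompact lattice in the unipotent radical $U$ of the algebraic hull; what the hull machinery of \cite{ol} gives is only that $\Gamma$ acts properly discontinuously and cocompactly on $U$ through the twisted action $\ga\cdot x=utxt^{-1}$. The subgroup $\Gamma\cap U$ is the Fitting subgroup of $\Gamma$ and is a lattice only in its own Mal'cev completion inside $U$, which may be proper. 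A counterexample already occurs for $m+2=3$: for the Lorentzian flat solvmanifold $\mathcal R/\pi$ with $\pi=\ZZ^2\rtimes_A\ZZ$ ($A$ hyperbolic), the hull is $\RR^3\rtimes T$ with $U=\RR^3$ the full translation group, yet $\pi\cap U=\ZZ^2$, because the generator of the $\ZZ$-factor splits in the hull as $u\cdot t$ with neither factor in $\pi$. Second, even if $\Lambda$ were a lattice in $U$, the Mal'cev--Raghunathan intersection theorem does not apply to an \emph{arbitrary} closed connected normal subgroup: a lattice meets a connected normal subgroup in a lattice only when that subgroup is rational with respect to the lattice (compare $\ZZ^2\subset\RR^2$ and an irrational line, which is normal but meets $\ZZ^2$ trivially). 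You give no reason why $V=U\cap\RR^{m+2}$ should be $\Lambda$-rational; it is normal in $U$ but not characteristic.

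The paper avoids both difficulties by arguing by contradiction. Assuming $\RR^{m+2}\cap\Gamma_0=\{1\}$ forces $L:\Gamma_0\ra L(\Gamma_0)$ to be an isomorphism, which by rigidity of the algebraic hull extends to an isomorphism $L:A(\Gamma_0)\ra A(L(\Gamma_0))$; in particular $L$ is then injective on $U$, so $\dim U=\dim L(U)\leq m+1$, contradicting $\dim U={\rm Rank}\,\Gamma_0=m+2$. The same dimension count is at work, but the injectivity of $L|_U$ is \emph{derived from the hypothesis to be contradicted}, rather than being bypassed by trying to produce lattice points in $\Ker(L|_U)$ directly. To repair your argument you would either have to adopt this contradiction scheme or prove independently that $V$ meets $\Gamma$ nontrivially --- which is essentially the statement to be proved.
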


\begin{proof}Let $\Gamma_0$ be a finite index solvable subgroup of $\Gamma$
and $A(\Gamma_0)=U\cdot T$ the real algebraic hull for $\Gamma_0$ as
above. Let $L:\Gamma_0\ra L(\Gamma_0)$ be the holonomy homomorphism
as in \eqref{discrecase}. As the real algebraic hull for
$L(\Gamma_0)$ can be taken inside ${\rO}(m+1,1)\times\RR^+$, $L$
extends naturally to a homomorphism $L: A(\Gamma_0)\ra
A(L(\Gamma_0))$. We have the following commutative diagram:
\begin{equation}\label{discrecase-1}
\begin{CD}
\RR^{m+2}@>>> {\rSim_L}(\RR^{m+2})@>L>>{\rO}(m+1,1)\times\RR^+\\
@.  @AAA @AAA @. \\
@. A(\Gamma_0)@> L>>A(L(\Gamma_0))\\
@. @AAA @AAA @. \\
@. \Gamma_0@>L>>L(\Gamma_0).\\
\end{CD}
\end{equation}

Suppose that $\RR^{m+2}\cap \Gamma_0=\{1\}$ so that $L:\Gamma_0\ra
L(\Gamma_0)$ is isomorphic. Then $L:A(\Gamma_0)\ra A(L(\Gamma_0))$
is also isomorphic (\cf \cite{ol}). Since $A(\Gamma_0)=U\cdot T$,
this implies $A(L(\Gamma_0))=L(U)\cdot L(T)$. If we note that
$A(L(\Gamma_0))$ is a solvable real linear algebraic group in
${\rO}(m+1,1)\times\RR^+$,
 it follows
\begin{equation}\label{algebraichull}
A(L(\Gamma_0))\leq (\RR^m\rtimes(T^k\times \RR^*))\times \RR^+.
\end{equation} Here $T^k$ is a maximal torus in ${\rm O}(m)$
for which $T^k\times \RR^*$ acts on $\RR^m$ as similarities. As
$L(U)$ is a connected simply connected unipotent Lie group, it
follows $L(U)\leq \RR^m\times \RR^+$. Thus, ${\rm dim}\, L(U)\leq
m+1$. On the other hand, $U/\Gamma_0$ is an $m+2$-dimensional
compact aspherical manifold, we note that ${\rm Rank}\,
\Gamma_0={\rm dim}\, U=m+2$. This contradicts that $L:\Gamma_0\ra
L(\Gamma_0)$ is isomorphic. Therefore $\RR^{m+2}\cap
\Gamma_0\leq\RR^{m+2}\cap \Gamma$ is nontrivial.

\end{proof}

\begin{pro}\label{ls=lf}
Every compact complete Lorentzian similarity manifold
is a Lorentzian flat space form.
\end{pro}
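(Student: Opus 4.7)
The plan is to show that every holonomy element $\gamma = (a,A,\lambda) \in \Gamma$ has trivial scaling factor $\lambda = 1$, so that $\Gamma \leq \rE(m+1,1) = \RR^{m+2} \rtimes \rO(m+1,1)$ and $M = \RR^{m+2}/\Gamma$ becomes a Lorentzian flat space form. The crucial input is the nontrivial translation subgroup $\Delta := \Gamma \cap \RR^{m+2}$ supplied by Proposition \ref{keypro}: being discrete it is free abelian, and hence a lattice in its real span $V \subseteq \RR^{m+2}$ of some dimension $k \geq 1$.

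First I would extract a determinant identity from the normality of $\Delta$ in $\Gamma$. For any $\gamma = (a,A,\lambda) \in \Gamma$ and any translation $\tau_w$ with $w \in V$, direct computation gives $\gamma \tau_w \gamma^{-1} = \tau_{\lambda A w}$. Thus $\lambda A$ preserves $V$ and restricts to an automorphism of the lattice $\Delta$, so $(\lambda A)|_V \in \Aut(\Delta) \cong \rGL(k,\ZZ)$, whose determinant is $\pm 1$. Taking absolute values yields
\[
\lambda^k \, |\det(A|_V)| = 1.
\]
When $\langle\,,\rangle|_V$ is non-degenerate (either spacelike or Lorentzian) $A|_V$ is an honest orthogonal transformation of $V$, so $|\det(A|_V)| = 1$ and therefore $\lambda = 1$. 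This already disposes of the full-rank case $k = m+2$ together with all subcases in which $V$ is non-null.

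The main obstacle is the degenerate case, where $V$ contains a one-dimensional null kernel $N$ on which $A$ may act by a scalar of modulus different from $1$, so the determinant identity alone does not pin down $\lambda$. To handle it I would reuse the dimension-reduction scheme of Case (1) in the proof of Proposition \ref{lsim12}: $\Gamma/\Delta$ descends to a properly discontinuous cocompact affine action on $\RR^{m+2-k}$, and the linear part of this quotient action carries exactly the same scaling factor $\lambda$. By induction on the dimension $m+2$, the resulting compact complete affinely flat manifold is either a Lorentzian similarity manifold (hence Lorentzian flat by the inductive hypothesis, so $\lambda = 1$) or a Riemannian similarity manifold (hence Riemannian flat by Fried's theorem \cite{df}, so $\lambda = 1$); if the induced form is again degenerate one iterates the reduction along the null line $N$. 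The indiscrete case (Case (2) of Proposition \ref{lsim12}) is treated analogously via the containment $L(\Gamma) \leq \RR^m \rtimes (\rO(m) \times \RR^*) \times \RR^+$ together with the cocompactness of $\Gamma$. Combining these cases gives $\lambda(\gamma) = 1$ for every $\gamma \in \Gamma$, completing the proof.
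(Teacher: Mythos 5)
Your determinant argument for the non-degenerate case is correct and is in fact cleaner than the paper's treatment of that case: since $\Delta=\Gamma\cap\RR^{m+2}$ is normal in $\Gamma$, the linear part $\lambda A$ of $\gamma$ restricts to an automorphism of the lattice $\Delta$, and non-degeneracy of the form on the span $V$ makes $|\det (A|_V)|=1$, so $\lambda^k=1$. Note, however, that the paper's Case (1) only requires a single non-null lattice vector $n$ (it conjugates the translation $n$ by $\gamma^k$ and lets the resulting orbit of the origin accumulate, contradicting properness), so it also covers degenerate spans of rank $\geq 2$; your dichotomy pushes all of those into the degenerate branch, where your argument is weakest.

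The genuine gap is precisely that degenerate branch, above all the case $\Delta\cong\ZZ$ generated by a null vector $n$, which is where the paper expends most of its effort. When $V=\RR n$ is a null line, the induced action of $\Gamma/\Delta$ on $\RR^{m+1}=\RR^{m+2}/V$ is \emph{not} by Lorentzian or Riemannian similarities: in the adapted basis $\{\ell_1,e_2,\dots,e_{m+1},\ell_{m+2}\}$ with $n=\ell_1$, the induced linear part of $\gamma=(a,\lambda A)$ is
\[
\left(\begin{array}{cc}\lambda B & -\lambda^2 B\,{}^tx\\ 0 & \lambda^2\end{array}\right),
\]
which scales different directions by $\lambda$ and by $\lambda^2$ and preserves no non-degenerate quadratic form up to scale. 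So the dichotomy ``Lorentzian similarity manifold or Riemannian similarity manifold'' borrowed from Case (1) of Proposition \ref{lsim12} does not apply here --- indeed the paper's own Proposition \ref{lsim12} abandons the dimension reduction in the degenerate case and switches to an amenability argument instead. Your phrase ``iterate the reduction along the null line'' does not repair this: there is no induced similarity structure to which an inductive hypothesis could be applied, and no single scaling factor on the quotient to kill. The paper closes this case by a genuinely different argument: it observes that the affine map $\rho(\gamma)$ induced on $\RR^{m+1}$ has linear part with no eigenvalue $1$ and hence a fixed point, conjugates by a translation so that $\gamma$ translates the remaining null coordinate by some $c$, compares $c$ with the translation length $t$ of $n$, and uses the rational/irrational dichotomy for $c/t$ together with properness and freeness of the action to extract a relation $\gamma^m=n^{\ell}$ in $\Gamma$, which forces $\lambda^{2m}=1$ and contradicts $\lambda<1$. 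Some substitute for this step is required before your proof is complete.
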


\begin{proof}Consider the exact sequences:
\begin{equation}\label{discrecase-f}
\begin{CD}
1@>>> {\rE}(m+1,1)@>>> {\rSim_L}(\RR^{m+2})@>q>>\RR^+@>>> 1\\
@. @AAA  @AAA @AAA @. \\
1@>>> \Gamma_1@>>> \Gamma@>q>>q(\Gamma)@>>> 1\\
\end{CD}
\end{equation}where $\Gamma_1={\rE}(m+1,1)\cap \Gamma$.
It is enough to show that $q(\Gamma)$ is trivial.
Suppose that there exists an element $\ga=(a,\lam A)\in \Gamma$ such that
$$q(\gamma)=\lam<1.$$
By Proposition \ref{keypro}, let
$\RR^{m+2}\cap \Gamma\cong \ZZ^\ell$ for some $\ell\geq 1$.

Let $\langle\,,\rangle$ be the Lorentz inner product on $\RR^{m+2}$ as before.
\smallskip

\noindent {\bf (1)}\,  Suppose $\ell\geq 1$. Then there exists a vector
$n\in \ZZ^k$ such that $\langle n,n \rangle\neq 0$.
Calculate $$\ga n\ga^{-1}=(a,\lam A)(n,I)(-A^{-1}a,\lam^{-1}A^{-1})
=(\lam An,I)$$ so that $\ga^k n\ga^{-k}=(\lam^k A^kn,I)$.
Take a sequence of orbits at the origin
 $\{\ga^k n\ga^{-k}\cdot 0;\, k=0,1,2,\dots\}$ in $\RR^{m+2}$.
As $\ga^k n\ga^{-k}\cdot 0=\lam^k A^kn$, it follows
$$\langle \lam^k A^kn,\lam^k A^kn\rangle=\lam^{2k}\langle A^kn,A^kn\rangle
=\lam^{2k}\langle n,n\rangle\ra\, 0\ \, (k\ra \infty).$$
Noting $\langle n,n \rangle\neq 0$, this implies that
$\displaystyle \ga^k n\ga^{-k}\cdot 0\ra\, 0$ $(k\ra \infty)$.
As $\Gamma$ acts properly discontinuously,
$\{\ga^k n\ga^{-k}\}$ is a finite set, \ie $\ga^k n\ga^{-k}=1$ for some $k$.
Thus $n=1$ which is a contradiction.\\

\noindent {\bf (2)}\, Suppose $\RR^{m+2}\cap \Gamma\cong \ZZ$ which
is generated by a null vector $n$, \ie $\langle n,n \rangle=0$.
 Since $\Gamma$ leaves $\ZZ$ invariant,
taking a subgroup of index $2$ (if necessary), we may assume
$\displaystyle n=\ga n\ga^{-1}=(\lam An,I)$ for $\ga=(a,\lam A)\in
\Gamma$. This implies $An=\lam^{-1}n$.

Let $\{\ell_1, e_2,\dots,e_{m+1},\ell_{m+2}\}$ be the
basis on $\RR^{m+2}$ such that $$\langle
\ell_1,\ell_1\rangle=\langle\ell_{m+2},\, \ell_{m+2}\rangle=0,\,
\langle e_i,e_j\rangle=\delta_{ij},\,
\langle\ell_1,\ell_{m+2}\rangle= 1.$$
The subgroup ${\Sim}(\RR^m)$ of ${\rm O}(m+1,1)$ has the form
with respect to the above basis:

\begin{equation}\label{similaritygroup}
{\Sim}(\RR^m)=\left\{A=
\left(\begin{array}{ccc}
\lambda^{-1} & x           &-\mbox{\Large $\frac{\lambda|x|^2}{2}$}\\
 \mbox{{\large $0$}}  & B           &-\lambda B\,{}^tx\\
 \mbox{{\large $0$}} &  \mbox{{\large $0$}}                     & \lambda \\
\end{array}\right)\, \mbox{\Huge$|$} \begin{array}{c} \lam\in \RR^+,\\
                                                      B\in\mathrm{O}(m),\\
                                                       x\in\RR^{m}.   \\
                                                      \end{array}
\right\}.
\end{equation}
See \cite{kam6} for details. We may take $n$ for the null vector
$\ell_1$. Since $An=\lam^{-1}n$, $A$ has the form as in
\eqref{similaritygroup}. Then we can write
\begin{equation}\label{matrixform}
\ga=(a,\lam A)= (\left[\begin{array}{c}
a_1 \\
a_2\\
\end{array}\right],
 \left(\begin{array}{ccc}
 1 &\lam x & -\lam^2|x|^2/{2}\\
 0 &\lam B  &-\lam^2 B{}^tx\\
 0 &  0&    \lambda^2 \\
\end{array}\right))
\end{equation}where $a_1\in \RR$, $a_2\in \RR^{m+1}$.
If we put $\displaystyle \rho(\ga)=
(a_2,
 \left(\begin{array}{cc}
\lam B  &-\lam^2 B{}^tx\\
 0&    \lambda^2 \\
\end{array}\right))\in {\rm A}(m+1)$,
then the matrix $\displaystyle  \left(\begin{array}{cc}
\lam B  &-\lam^2 B{}^tx\\
 0&    \lambda^2 \\
\end{array}\right)$ has no eigenvalue $1$ so that
$\rho(\ga)$ has a fixed point $y\in \RR^{m+1}$, \ie
$\rho(\ga)(y)=y$. Conjugate $\Gamma$ by a translation $\displaystyle
t_y= (\left[\begin{array}{c}
  0\\
  -y\\
\end{array}\right], I)$, it follows
\begin{equation}
t_y\ga t_y^{-1}= (\left[\begin{array}{c}
c \\
0\\
\end{array}\right],
 \left(\begin{array}{ccc}
 1 &\lam x & -\lam^2|x|^2/{2}\\
 0 &\lam B  &-\lam^2B{}^tx\\
 0 &  0&    \lambda^2 \\
\end{array}\right))
\end{equation}where $\displaystyle c=a_1+(\lam x,-\lam^2|x|^2/{2})\cdot y\in \RR$.

When we consider the orbits of $\displaystyle\{t_y\ga^k t_y^{-1}\}$ at
the origin $\displaystyle 0=\left(\begin{array}{c}
0 \\
0\\
\end{array}\right)\in \RR^{m+2}$  $(k=1,2,\dots)$, it follows
\begin{equation}\label{matruxc}
t_y\ga^k t_y^{-1}\cdot \left(\begin{array}{c}
0 \\
0\\
\end{array}\right)=
\left(\begin{array}{c}
kc \\
0\\
\end{array}\right).
\end{equation}
On the other hand, noting $t_yn t_y^{-1}=n$, we put
$n=(\left[\begin{array}{c}
t \\
0\\
\end{array}\right],I)$.\\

{\bf Case I.}\, $\displaystyle \frac ct$ is rational, say
$\displaystyle\frac pq$. Take the element $t_y\ga^q t_y^{-1}\cdot
n^{-p}\in t_y\Gamma t_y^{-1}$. Then it follows
\begin{equation}\label{matruxcc}
t_y\ga^q t_y^{-1}\cdot n^{-p}
\left(\begin{array}{c}
0 \\
0\\
\end{array}\right)=
\left(\begin{array}{c}
qc-pt \\
0\\
\end{array}\right)=
\left(\begin{array}{c}
0 \\
0\\
\end{array}\right).
\end{equation}Since $t_y\Gamma t_y^{-1}$ acts freely on $\RR^{m+2}$,
this shows $t_y\ga^q t_y^{-1}\cdot n^{-p}=1$, and thus
$\ga^q=t_y^{-1}n^{p}t_y=n^{p}$. The linear part of $\ga^q$
 is $(\lam A)^q$ for $\ga=(a,\lam A)$, so it follows $(\lam A)^q=I$.
By the formula of \eqref{matrixform}, we obtain $\lam^{2q}=1$.
This is impossible because $\lam<1$ for the element $\ga$.\\

{\bf  Case II.}\, $\displaystyle \frac ct$ is irrational. Let
$\displaystyle\mathop{\lim}_{i\ra \infty}^{}\frac
{\ell_i}{m_i}=\frac ct$, equivalently there exist integers
$m_i,\ell_i$ such that $\displaystyle m_ic-\ell_it\ra\, 0$ $(i\ra
\infty)$. Take a sequence of elements $\{t_y\ga^{m_i} t_y^{-1}\cdot
n^{-\ell_i}\ \, i=1,2,\dots\}$ in $t_y\Gamma t_y^{-1}$ and evaluate
at the origin:
\begin{equation}\label{matruxcc}
t_y\ga^{m_i} t_y^{-1}\cdot n^{-\ell_i}
\left(\begin{array}{c}
0 \\
0\\
\end{array}\right)=
\left(\begin{array}{c}
m_ic-\ell_it \\
0\\
\end{array}\right)\lra \left(\begin{array}{c}
0 \\
0\\
\end{array}\right) \ \ (i\ra \infty).
\end{equation}By properness of $t_y\Gamma t_y^{-1}$,
$\{t_y\ga^{m_i} t_y^{-1}\cdot n^{-\ell_i}\}$ is a finite set,
say $t_y\ga^{m_i} t_y^{-1}\cdot n^{-\ell_i}=
t_y\ga^{m_j} t_y^{-1}\cdot n^{-\ell_j}$  for some $i,j$. As $t_y$ and $n$ commute,
it follows
\begin{equation*}
\ga^m=n^{\ell}\ \ ({}^\exists\, m,\ell\in \ZZ).
\end{equation*}
Again the formula of \eqref{matrixform} implies
$\lam^{2m}=1$ which is impossible for $\ga=(a,\lam A)$.

As a consequence, $q(\Gamma)=\{1\}$ in \eqref{discrecase-f}.

\end{proof}

\section{Lorentzian flat Seifert manifolds}\label{Seifertlo}
Let $M=\RR^{m+2}/\Gamma$ be a compact complete Lorentzian similarity manifold.
It follows from Proposition \ref{keypro} that $\Gamma\cap \RR^{m+2}$ is
nontrivial, say $\ZZ^k$. Then $\Gamma$ normalizes its span $\RR^k$ of $\ZZ^k$
in $\RR^{m+2}$. As $\RR^k$ acts properly on $\RR^{m+2}$ as translations,
we have an equivariant principal bundle:
\begin{equation}\label{Seifertbunlde}
\begin{CD}
(\ZZ^k,\RR^k)@>>> (\Gamma,\RR^{m+2})@>\nu>> (Q,\RR^{\ell})
\end{CD}
\end{equation}where $\ell=m+2-k$ and $Q=\Gamma/\ZZ^k$.
In this case each element $\gamma$ of $\Gamma$ has the form:

\begin{equation}\label{ga-matrix1}
\ga=(\left[\begin{array}{c}
a \\
b\\
\end{array}\right],
\left(\begin{array}{cc}
 A &C\\
 0 &B\\
\end{array}\right))
\end{equation}\mbox{where}
\begin{equation}\label{ga-matrix1}
 \nu(\ga)=
\left(\begin{array}{cc}
 A &C\\
 0 &B\\
\end{array}\right),\  A\in{\rm GL}(k,\ZZ), B\in {\rm GL}(\ell,\RR).
\end{equation}If we put
\begin{equation}\label{affineinduced}
\rho(\nu(\ga))= (b,B) \in {\rA}(\ell),
\end{equation}
 then it is easy to see that $\rho: Q\ra {\rA}(\ell)$ is a
well-defined homomorphism. The quotient group $Q$ acts on $\RR^\ell$
through $\rho$: \[ \al\cdot w=\rho(\nu(\ga))w\, \, \
(\nu(\ga)=\al\in Q, w\in \RR^\ell.)\] Recall the following lemma
(\cf \cite{gk}).

\begin{lemma}\label{properaff} The group $\rho(Q)$ is
a properly discontinuous affine action on $\RR^\ell$
such that
\begin{itemize}
\item ${\rm Ker}\, \rho $ is a finite subgroup.
\item $\RR^{\ell}/\rho(Q)$ is a compact affine orbifold.
\end{itemize}
\end{lemma}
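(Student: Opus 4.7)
The plan is to deduce all three assertions from the properness of the $\Gamma$-action on $\RR^{m+2}$ combined with the cocompactness of $\ZZ^k$ in its $\RR$-span $\RR^k$. First I would observe that $\rho$ is well-defined because $\ZZ^k\subset\RR^k$ acts trivially on the quotient $\RR^{m+2}/\RR^k\cong\RR^\ell$: the homomorphism $\rho:Q\to \mathrm{A}(\ell)$ is simply the descent of the $\Gamma$-action on this quotient, and $\ZZ^k=\ker\nu$ is normal in $\Gamma$ because $\Gamma$ normalizes $\RR^k$.

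For proper discontinuity, fix a compact fundamental domain $\overline D\subset\RR^k$ for $\ZZ^k$. Suppose, to obtain a contradiction, that there is an infinite sequence of distinct $h_i=\rho(\alpha_i)\in\rho(Q)$ with $h_iK'\cap K'\neq\emptyset$ for some compact $K'\subset\RR^\ell$; after extraction one finds $y_i\in K'$ and $z\in K'$ with $h_iy_i\to z$. Pick lifts $\gamma_i\in\Gamma$ of $\alpha_i$; the $\RR^\ell$-component of $\gamma_i\cdot(0,y_i)$ is $h_iy_i$, which is bounded. Because $\nu$ vanishes on $\ZZ^k$, replacing $\gamma_i$ by $n_i\gamma_i$ for an appropriate $n_i\in\ZZ^k$ does not affect $\alpha_i$ but brings the $\RR^k$-component of the image of $(0,y_i)$ into $\overline D$. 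The sequence $n_i\gamma_i\cdot(0,y_i)$ is then bounded in $\RR^{m+2}$, so properness of $\Gamma$ forces $\{n_i\gamma_i\}$ to be finite, whence $\{\alpha_i\}$ is finite, contradicting the distinctness of the $h_i$.

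For the finiteness of $K=\ker\rho$, any $\alpha\in K$ satisfies $b=0$ and $B=I$ in the block form \eqref{ga-matrix1}, so any lift $\gamma\in\nu^{-1}(\alpha)$ preserves the slice $\RR^k\times\{0\}$ and satisfies $\gamma\cdot(0,0)=(a,0)$ for some $a\in\RR^k$. Modifying $\gamma$ by a $\ZZ^k$-translation brings $a$ into $\overline D$, giving a representative $\gamma_\alpha$ with $\gamma_\alpha\cdot(0,0)\in\overline D\times\{0\}$. Distinct $\alpha\in K$ yield distinct $\gamma_\alpha\in\Gamma$ (since $\nu(\gamma_\alpha)=\alpha$), so properness of $\Gamma$ applied to the compact set $\{(0,0)\}\cup(\overline D\times\{0\})$ forces $K$ to be finite.

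Finally, compactness of $\RR^\ell/\rho(Q)$ follows from the $\Gamma$-equivariant fibration \eqref{Seifertbunlde}: the projection descends to a continuous surjection $\RR^{m+2}/\Gamma\to\RR^\ell/\rho(Q)$, and compactness of the source forces compactness of the target. The orbifold structure comes for free from proper discontinuity, which ensures finite point-isotropies in $\rho(Q)$. The most delicate step I expect is the sliding in the proper-discontinuity argument, where the normality of $\ZZ^k$ in $\Gamma$ and its cocompactness in $\RR^k$ must be combined carefully; the rest is essentially bookkeeping.
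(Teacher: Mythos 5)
Your proof is correct, and it follows the same overall strategy as the paper --- everything is reduced to the properness of $\Gamma$ on $\RR^{m+2}$ together with the cocompactness of $\ZZ^k$ in its span $\RR^k$ --- but the two technical steps are implemented differently. For proper discontinuity the paper first forms the pushout $\RR^k\cdot\Gamma$, argues that this enlarged group still acts properly on $\RR^{m+2}$, chooses a \emph{continuous section} $s:\RR^\ell\ra\RR^{m+2}$ of the principal $\RR^k$-bundle, and lifts a convergent sequence $\al_i w_i\ra z$ to elements $t_i\ga_i$ ($t_i\in\RR^k$) carrying $s(w_i)$ to $s(\al_i w_i)$; properness of $\RR^k\cdot\Gamma$ then gives convergence of $t_i\ga_i$ and hence of $\al_i=\nu(t_i\ga_i)$. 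You instead stay inside $\Gamma$ itself: you normalize lifts by sliding with integer translations $n_i\in\ZZ^k$ so that the fiber coordinate lands in a fixed compact fundamental domain $\overline D$, and then invoke properness of $\Gamma$ directly. This is more elementary (no pushout, no section), at the cost of being tied to the linear splitting $\RR^{m+2}\cong\RR^k\times\RR^\ell$, whereas the section argument is the one that generalizes to arbitrary principal bundles. For the finiteness of $\Ker\rho$ the paper observes that $\Gamma_1=\nu^{-1}(\Ker\rho)$ preserves the fiber $\RR^k$ and that $T^k=\RR^k/\ZZ^k$ covers the closed submanifold $\RR^k/\Gamma_1$ of $M$, forcing the deck group $\Ker\rho$ to be finite; your version again replaces this covering-space argument by the fundamental-domain normalization and properness at the origin. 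Both routes are sound, and your explicit surjection $\RR^{m+2}/\Gamma\ra\RR^\ell/\rho(Q)$ for cocompactness supplies a detail the paper leaves implicit.
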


\begin{proof}
 We show that $Q$ acts properly discontinuously.
Consider the pushout:
\begin{equation*}
\begin{CD}
1@>>> \ZZ^k@>>> \Gamma@>\nu>>Q@>>> 1\\
@. @VVV  @VVV @VVV @. \\
1@>>> \RR^k@>>> \RR\cdot\Gamma@>\nu>>Q@>>> 1.\\
\end{CD}
\end{equation*}As both $\RR^k$ and $\Gamma$ act freely and properly
on $\RR^{m+2}$ with $\RR^k/\ZZ^k=T^k$, it follows that
$\RR^k\cdot\Gamma$ acts properly on $\RR^{m+2}$.
 Since
$\displaystyle\RR^k\ra \RR^{m+2}\stackrel{\nu}\lra \RR^\ell$ is a
principal bundle, choose a \emph{continuous} section
$s:\RR^{\ell}\ra \RR^{m+2}$ of $\nu$.
 Let
$\{\al_i\}_{i\in \NN}$ be a sequence of $Q$ such that \[ \al_i\cdot
w_i\ra z,\ w_i\ra w \ \ (i\ra \infty).\] Choose a sequence
$\{\ga_i\}_{i\in \NN}$ from $\Gamma$ such that $\nu(\ga_i)=\al_i$.
As $$\nu(\ga_i s(w_i))=\al_i\cdot w_i=\nu(s(\al_i w_i)),$$ there is
a sequence $\{t_i\}_{i\in \NN}\leq \RR^k$ such that
 \begin{equation*}
t_i\ga_i s(w_i)=s(\al_i w_i), \ \ s(\al_i\cdot w_i)\ra s(z), \ \
s(w_i)\ra s(w).
 \end{equation*} Since
$\RR^k\cdot\Gamma$ acts properly on $\RR^{m+2}$, there is an element
$g\in\RR^k\cdot\Gamma$ such that $t_i\ga_i\ra g$ and so
$\al_i=\nu(t_i\ga_i)\ra \nu(g)\in \Gamma$. Thus $Q$ acts properly
discontinuously on $\RR^\ell$.

 We check that ${\rm Ker}\, \rho $ is finite. Let
$\displaystyle 1\ra \ZZ^k\ra \Gamma_1\ra {\rm Ker}\, \rho \ra 1$ be
the induced extension by the inclusion ${\rm Ker}\,\rho\leq Q$.
 Then $\Gamma_1$ acts invariantly in
the inverse image $\RR^k=\nu^{-1}(pt)$. As $\Gamma$ acts freely and
properly, the quotient $\RR^k/\Gamma_1$ is a closed submanifold in
$M$.  Since $\RR^k/\ZZ^k=T^k$ covers $\RR^k/\Gamma_1$, ${\rm Ker}\,
\rho $ is finite.

\end{proof}

By the definition \cite {lr}, we obtain

\begin{pro}\label{Seifert}
$T^k\ra M\lra \RR^{\ell}/\rho(Q)$ is an injective Seifert fiber
space with typical fiber a torus $T^k$ and
 exceptional fiber a euclidean space form $T^k/F$.
\end{pro}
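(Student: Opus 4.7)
The plan is to verify directly that the data in \eqref{Seifertbunlde} satisfies the definition of an injective Seifert fiber space in the sense of \cite{lr}. First, I would note that the principal $\RR^k$-action on $\RR^{m+2}$ by translations in the span of $\ZZ^k$ is free and proper, realizing $\nu:\RR^{m+2}\to\RR^\ell$ as a principal $\RR^k$-bundle. Since every $\gamma\in\Gamma$ normalizes $\RR^k$ (being in the normalizer of $\ZZ^k$ inside the affine group), $\Gamma$ preserves the foliation by $\RR^k$-orbits and descends through $\rho\circ\nu$ to an action on $\RR^\ell$. Thus $\nu$ is $(\Gamma,Q)$-equivariant, and because $\ZZ^k$ lies in the kernel of $\rho\circ\nu$, we obtain a continuous quotient map
\begin{equation*}
\bar\nu:M=\RR^{m+2}/\Gamma\lra \RR^\ell/\rho(Q).
\end{equation*}

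Next I would analyze the fibers of $\bar\nu$. Fix $w\in\RR^\ell$ and let $Q_w\leq Q$ be its isotropy; by Lemma \ref{properaff} the group $\rho(Q_w)$ is finite, and its kernel is the finite group $\mathrm{Ker}\,\rho$. Writing $\Gamma_w\leq\Gamma$ for the preimage of $Q_w$, we get a short exact sequence
\begin{equation*}
1\lra \ZZ^k\lra \Gamma_w\lra Q_w\lra 1,
\end{equation*}
and the fiber $\bar\nu^{-1}([w])$ is identified with $\nu^{-1}(w)/\Gamma_w=\RR^k/\Gamma_w$. When $Q_w$ is trivial, this is the typical fiber $\RR^k/\ZZ^k=T^k$. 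When $Q_w$ is a nontrivial finite group $F$, the group $\Gamma_w$ is a Bieberbach-type extension of the lattice $\ZZ^k$ by $F$; since $\Gamma_w$ normalizes $\ZZ^k$, the quotient $F$ acts affinely on $T^k=\RR^k/\ZZ^k$ and
\begin{equation*}
\RR^k/\Gamma_w \;=\; T^k/F,
\end{equation*}
a euclidean space form, as required. Injectivity of the fibration amounts to the injectivity of $\pi_1(T^k)=\ZZ^k\hookrightarrow \pi_1(M)=\Gamma$, which is tautological since $\ZZ^k=\Gamma\cap\RR^{m+2}$.

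Finally I would verify local triviality near exceptional fibers in the sense of \cite{lr}: choose a small $Q_w$-invariant ball $V\subset\RR^\ell$ around $w$ on which $Q_w$ acts linearly (possible because the stabilizer of a point of a properly discontinuous affine action is conjugate into $\mathrm{GL}$); lift to a $\Gamma_w$-invariant tube $\nu^{-1}(V)\cong \RR^k\times V$, on which $\Gamma_w$ acts through its chosen extension. The quotient tube gives the standard model $(\RR^k\times V)/\Gamma_w$ of an exceptional Seifert neighborhood with fiber $T^k/F$. The principal obstacle in this verification is simply matching conventions: checking that the action of $F$ on $T^k$ obtained from conjugation in $\Gamma_w$ agrees with the action arising from the affine model in \cite{lr}. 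Once the isotropy is shown to sit inside $\RR^k\rtimes\mathrm{GL}(k,\ZZ)$ (which follows from \eqref{ga-matrix1}), both descriptions coincide and the statement of Proposition \ref{Seifert} follows.
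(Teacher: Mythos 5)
Your verification is correct and uses exactly the ingredients the paper assembles for this purpose --- the equivariant bundle \eqref{Seifertbunlde}, the homomorphism $\rho$ of \eqref{affineinduced}, and Lemma \ref{properaff}; the paper itself gives no argument beyond the phrase ``By the definition \cite{lr}'', so your fiberwise analysis ($T^k$ over free orbits, $T^k/F=\RR^k/\Gamma_w$ over exceptional ones, injectivity from $\ZZ^k=\Gamma\cap\RR^{m+2}\hookrightarrow\Gamma$, and the tube $(\RR^k\times V)/\Gamma_w$) is precisely the routine check it leaves implicit. Nothing further is needed.
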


In \cite{df1} Fried has found all simply transitive Lie group
actions on $4$-dimensional Lorentzian flat space $\RR^4$ which
applied to classify $4$-dimensional compact (complete) Lorentzian
flat manifolds $M$ up to a finite cover. As a consequence, $M$ is
finitely covered by a solvmanifold.

We take a different approach to determine $4$-dimensional compact
complete Lorentzian flat manifolds $M$ from the existence of
\emph{causal actions}.

\begin{definition}
A circle $S^1$ $($respectively $\RR)$ is a causal action on $M$ if
the vector field induced by $S^1$ $($respectively $\RR)$ is causal
$($timelike, spacelike or lightlike$)$ vector field on $M$. Compare
\cite{kam5}.
\end{definition}

We have the following result which occurs particularly in dimension
$4$
 but not in general.
%Note that this phenomenon
\begin{pro}\label{4-center}
The fundamental group $\Gamma$ of
a compact complete Lorentzian flat manifold $M$
has a finite index subgroup which contains
a central translation subgroup. In particular, some finite cover of $M$ admits a
causal circle action.
\end{pro}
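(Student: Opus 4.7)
My plan is to first reduce, using Proposition~\ref{lsim12} and Selberg's lemma, to a torsion-free solvable normal subgroup of finite index in $\Gamma$; then Proposition~\ref{ls=lf} places $\Gamma\leq{\rE}(3,1)$ and Proposition~\ref{keypro} furnishes a nontrivial translation lattice $\ZZ^{k}:=\Gamma\cap\RR^{4}$ with $1\leq k\leq 4$. A translation $(v,I)\in\Gamma$ is central in $\Gamma$ exactly when $L(\gamma)v=v$ for every $\gamma\in\Gamma$, so it suffices to produce, after a further finite-index passage, a nonzero $v\in\ZZ^{k}$ fixed by the linear holonomy $L(\Gamma)\leq{\rO}(3,1)$.

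The key structural input is a rank count. From the Seifert sequence $1\to\ZZ^{k}\to\Gamma\to Q\to 1$ of Proposition~\ref{Seifert}, the quotient $Q$ acts properly discontinuously and cocompactly on $\RR^{\ell}$ with $\ell=4-k$; the algebraic hull $A(\Gamma)=U\cdot T$ of Proposition~\ref{lsim13} has $\dime U=\Rank\Gamma=4$, so $\Rank Q=\ell$. Because $\Gamma\cap\RR^{4}=\ZZ^{k}$, every element of $\Ker(L|_{Q})$ lifts to a translation in $\ZZ^{k}$, hence $L|_{Q}\colon Q\to L(\Gamma)$ is injective and $L(\Gamma)$ is virtually abelian of rank exactly $\ell$.

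I would then argue case by case on $k$. For $k=4$ the holonomy is finite and a finite-index subgroup has $\ZZ^{4}$ central. For $k=1$, $\Aut(\ZZ)=\{\pm1\}$, so an index-$2$ subgroup centralizes the generator. For $k=3$, $L(\Gamma)$ is virtually cyclic in ${\rO}(3,1)$ and preserves $\RR^{3}\subset\RR^{4}$ together with $\ZZ^{3}$; a subcase analysis on the signature of $\langle\,,\,\rangle|_{\RR^{3}}$ (definite, $(2,1)$, or degenerate with $1$-dimensional null radical) yields in each instance an $L(\Gamma)$-fixed rational line, the rationality coming from $A\in{\rO}(2,1)\cap\rGL(3,\ZZ)$ being integral and from $(\RR^{3})^{\perp}$ being automatically rational in the degenerate case. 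For $k=2$, $L(\Gamma)$ is virtually $\ZZ^{2}$ inside ${\rO}(3,1)$; since the rank-$2$ virtually abelian discrete subgroups of ${\rO}(3,1)$ lie, up to finite index, in the unipotent radical $\RR^{2}$ of the parabolic stabilizing a null direction $\fR$, and the $2$-plane spanned by $\ZZ^{2}$ is also $L(\Gamma)$-invariant, either $\fR$ lies in that $2$-plane (giving a rational fixed null direction) or the $2$-plane is pointwise fixed (so that all of $\ZZ^{2}$ is central after a further finite-index passage).

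In every case I would recover a nonzero $v\in\ZZ^{k}$ with $(v,I)$ central in a finite-index subgroup $\Gamma'\leq\Gamma$. The one-parameter group $\{(tv,I):t\in\RR\}$ then descends to an $\RR$-action on $\RR^{4}/\Gamma'$; factoring by the central $\ZZ\cdot(v,I)\subset\Gamma'$ produces a circle action whose orbit vector field is the constant, hence causal, vector $v$. I expect the main obstacle to be the case $k=2$: without the rank count, an Anosov-type holonomy on a $(1,1)$-subspace could in principle act freely on $\ZZ^{2}$, but this is ruled out because $\rGL(2,\ZZ)\cap{\rO}(1,1)$ has rank $1$ while the rank count forces the holonomy to have rank $2$---this is precisely where the ambient dimension being $4$ enters, and explains the author's remark that the statement fails in general.
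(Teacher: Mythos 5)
Your overall strategy coincides with the paper's: reduce to producing a nonzero $v\in\ZZ^{k}=\Gamma\cap\RR^{4}$ with $L(\gamma)v=v$ for all $\gamma$ in a finite-index subgroup, extract the rank count $\Rank L(\Gamma)=4-k$ from the Seifert quotient and the algebraic hull, and split into cases on $k$. The cases $k=1,4$ are immediate, and $k=3$ can be made to work along your lines (the paper instead uses the induced affine action $\rho:L(\Gamma)\ra \rA(1)$ and an eigenvalue count; note that your claim that the radical $(\RR^{3})^{\perp}$ is ``automatically rational'' is not automatic --- what one actually uses is that an integral matrix preserving a degenerate form with definite quotient has all eigenvalues on the unit circle, hence is quasi-unipotent by Kronecker, hence has a rational fixed line). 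The genuine gap is in the case $k=2$, which is exactly where the paper spends almost all of its effort.

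There you invoke the classification of rank-two virtually abelian \emph{discrete} subgroups of $\rO(3,1)$ to place $L(\Gamma)$ in the unipotent radical of a parabolic. But $L(\Gamma)$ is a linear holonomy group and nothing forces it to be discrete; the indiscrete possibility is precisely why Proposition \ref{lsim12} has a separate case {\bf (2)}. Abstractly, a rank-two free abelian subgroup of $\rO(3,1)$ consistent with your rank count can sit non-discretely in $\SO(2)\times\RR^{+}$, the stabilizer of a spacelike geodesic: take a loxodromic element acting on a $(1,1)$-plane $V=\mathrm{span}(\ZZ^{2})$ by an Anosov matrix preserving $\ZZ^{2}$, together with an irrational rotation acting on $V^{\perp}$. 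This fixes no nonzero vector of $\ZZ^{2}$, and your fallback (``$\rGL(2,\ZZ)\cap\rO(1,1)$ has rank $1$ while the holonomy has rank $2$'') does not exclude it, because that bound only controls the image of the restriction $\phi:L(\Gamma)\ra\Aut(\ZZ^{2})$; the missing rank hides in the action on $V^{\perp}$, which $\phi$ does not see. What kills this configuration in the paper is a different input: the induced action of $Q\cong L(\Gamma)$ on $\RR^{2}=\RR^{4}/V$ is a two-dimensional affine crystallographic action, so its linear part is unipotent up to finite index; a unipotent subgroup of $\rO(2)$ (respectively a unipotent element of the semisimple group $\SO(2)\times\RR^{+}$, via $\Ker\phi$ and maximality of $\ZZ^{2}=\Gamma\cap\RR^{4}$) is trivial, which collapses the rank and forces $L(\Gamma)$ into the unipotent radical $\RR^{2}$ after all. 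Once that is established, your concluding dichotomy (rational fixed null line versus pointwise-fixed plane) is correct; but you need to import the unipotency of the induced affine holonomy --- discreteness is not available as a substitute.
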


\begin{proof}
Let $\ZZ^k=\Gamma\cap \RR^4$ which is a nontrivial translation
subgroup by Proposition \ref{keypro}. If $k=1$, then $\ZZ$ is
central in a subgroup of finite index in $\Gamma$.

{\bf Case 1.}\, Suppose that $\ZZ^2=\Gamma\cap \RR^4$ (which is maximal).
Let
$$G=\RR^{4}\rtimes (\RR^2\rtimes({\rm SO}(2)\times \RR^+))$$
be the maximal connected solvable Lie subgroup of
${\rm E}(3,1)$. (See the proof of {\bf (2)}
of Proposition \ref{lsim12}.)
 Then
$\Gamma$ lies  in the following exact sequences up to finite index:
{\small
\begin{equation}\label{Ediscrecase}
\begin{CD}
1@>>> \RR^{4}@>>> {\rE}(3,1)@>L>>{\rm O}(3,1)@>>> 1\\
@. @AAA  @AAA @AAA @. \\
1@>>> \ZZ^2@>>> \Gamma@>L>>L(\Gamma)@>>> 1\\
@. @V\mu_PVV  @V\mu_PVV @V\mu_PVV @. \\
1@>>> \RR^{4}@>>> G@>L>>\RR^2\rtimes({\rm SO}(2)\times \RR^+)@>>> 1\\
\end{CD}
\end{equation}
}Here $\mu_P$ is the conjugate homomorphism by some matrix $P\in {\rm GL}(4,\RR)$.
For $\ga\in \Gamma$, we write
\begin{equation}\label{ga-matrix1}
\ga=(\left[\begin{array}{c}
a_1 \\
a_2\\
\end{array}\right],
\left(\begin{array}{cc}
 A &C\\
 0 &B\\
\end{array}\right))
\end{equation}
so that $\displaystyle L(\ga)=\left(\begin{array}{cc}
 A &C\\
 0 &B\\
\end{array}\right)$.
The conjugation homomorphism $\phi:L(\Gamma)\ra {\rm Aut}(\ZZ^2)$ is
given by
\[\phi(L(\ga))=A\in{\rm GL}(2,\ZZ).\]
As $L(\Gamma)$ is a free abelian group of {\rm rank}\, $2$,
$\phi(L(\Gamma))$ belongs to $A$ or $N$ up to conjugacy where ${\rm
SL}(2,\RR)=KAN$. Since ${\rm GL}(2,\ZZ)$ is discrete,
$\phi(L(\Gamma))$ is isomorphic to $\ZZ$, and so ${\rm Ker}\,
\phi=\ZZ$. Choose a generator $\ga_0$ from ${\rm Ker}\, \phi$ and
$\ga\in \Gamma$ for which $\phi(L(\ga))$ generates
$\phi(L(\Gamma))$. Note $\ga_0,\ga$ and $\ZZ^2$ generate $\Gamma$.

Recall the homomorphism $\rho:L(\Gamma)\ra {\rm A}(2)$ defined by
$\displaystyle \rho(L(\ga))=(a_2,B)$ from \eqref{affineinduced}.
Since $\rho(L(\Gamma))$ is a properly discontinuous action of ${\rm
A}(2)$ with compact quotient, the holonomy group of
$\rho(L(\Gamma))$ is a \emph{unipotent subgroup} of ${\rm
GL}(2,\RR)$. In particular, each $B$ has two eigenvalues $1$ and so
$L(\ga)$ has at least two eigenvalues $1$. From \eqref{Ediscrecase},
$\mu_P(L(\Gamma))\leq\RR^2\rtimes({\rm SO}(2)\times \RR^+)$ for
which
\begin{equation}\label{gaform}
\mu_P(L(\ga))=PL(\ga)P^{-1}=
 \left(\begin{array}{ccc}
 \lam^{-1} &x & -\lam|x|^2/{2}\\
 0 &T  &-\lam T{}^tx\\
 0 &  0&    \lambda \\
\end{array}\right)
\end{equation}where $T\in {\rm SO}(2)$.
Since $L(\Gamma)$ is a free abelian group of {\rm rank}\, $2$, it
follows either $\mu_P(L(\Gamma))\leq\RR^2$ or
$\mu_P(L(\Gamma))\leq{\rm SO}(2)\times \RR^+$.

If $\mu_P(L(\Gamma))\leq{\rm SO}(2)\times \RR^+$,
applying $\ga_0\in {\rm Ker}\, \phi$,
\begin{equation}\label{ga0}
PL(\ga_0)P^{-1}=
 \left(\begin{array}{ccc}
 \lam^{-1} &0 & 0\\
 0 &T  &0\\
 0 &  0&    \lambda \\
\end{array}\right).
\end{equation}As $\phi(L(\ga_0))=A=I$ in this case,
$L(\ga_0)$ has all eigenvalues $1$. \eqref{ga0} shows $\lam=1$,
$T=I$. Hence $PL(\ga_0)P^{-1}=I$ or $L(\ga_0)=I$. So $\ga_0\in
\Gamma\cap \RR^4$ which contradicts a maximality of the translation
subgroup $\ZZ^2$. It then follows $\mu_P(L(\Gamma))\leq\RR^2$. In
this case
\begin{equation}\label{ga1}
PL(\ga)P^{-1}=
 \left(\begin{array}{ccc}
 1 &x & -|x|^2/{2}\\
 0 &I  &-{}^tx\\
 0 &  0&  1 \\
\end{array}\right)).
\end{equation}
Then $A$ of \eqref{ga-matrix1}
has two eigenvalues $1$ so $[\ga, \ZZ^2]=(A-I)\ZZ^2$ has rank less
than $2$. Hence there is an element $m\in \ZZ^2$ such that
$[\ga,m]=1$. As $\phi(\ga_0)=1$, $\ga_0m\ga_0^{-1}=m$.
Hence $m$ is a central element of $\Gamma\cap \RR^4$.\\

{\bf Case 2.}\, Suppose that $\ZZ^3=\Gamma\cap \RR^4$.
There is an induced affine action $\rho:L(\Gamma)\ra {\rm A}(1)$ in this case
so that $\rho(L(\Gamma))$ consists of a translation group up to finite index.
As above we obtain
\begin{equation}\label{ga-matrix2}
\ga=(\left[\begin{array}{c}
a \\
b\\
\end{array}\right],
\left(\begin{array}{cc}
 A &C\\
 0 &1\\
\end{array}\right))
\end{equation}where $A\in {\rm GL}(3,\ZZ)$.
Since $L(\ga)$ has the eigenvalue $1$, in view of \eqref{gaform}, it
follows either $T=I$ or $\lam=1$. If $T=I$, $A$ has at least one
eigenvalue $1$. As $\Gamma=\ZZ^3\rtimes \ZZ$, it follows ${\rm
Rank}\,[\gamma,\ZZ^3]<3$. Again there exists an element $n\in\ZZ^3$
such that $\ga n\ga^{-1}=n$. Hence $n$ is a central element in
$\Gamma$.

\end{proof}

Let $\ZZ$ be a central translation subgroup of $\Gamma$.
Put $Q=\Gamma/\ZZ$.
As every element $\ga\in \Gamma$ has the form
\begin{equation}\label{ga-matrix2}
\ga=(\left[\begin{array}{c}
a \\
b\\
\end{array}\right],
\left(\begin{array}{cc}
 1 &C\\
 0 &B\\
\end{array}\right))
\end{equation}where $B\in{\rm GL}(3,\RR)$,
there is an induced action
\[
\varphi: Q\ra {\rm A}(3), \ \varphi(\bar \ga)=(b,B).
\]
Although $\ZZ$ is not necessarily equal to $\Gamma\cap \RR^{4}$, it
can be easily checked that $\varphi: Q\ra {\rm A}(3)$ is a properly
discontinuous action such that $\RR^3/\varphi(Q)$ is compact and
${\rm Ker}\, \varphi$ is finite as in Lemma \ref{properaff}. If
$\RR$ is the span of $\ZZ$ in $\RR^4$, then $\RR$ is causal on
$\RR^4$.

\begin{pro}\label{causalclass}
Every compact complete Lotentzian flat $4$-manifold
admits a causal circle bundle $M$ in its finite cover.

\begin{itemize}
\item[(i)] $S^1$ is a timelike circle. $M=T^4=S^1\times T^3$ where $T^3$ is a
Riemannian flat torus.
\item[(ii)] $S^1$ is a spacelike circle. $(1)$\, $M=S^1\times T^3$,
$(2)$\, $M=S^1\times \mathcal N^3/\Delta$. $(3)$\,
$M=S^1\times \mathcal R/\pi$.  Each $3$-dimensional factor is
a Lorentzian flat manifold.
\item[(iii)] $S^1$ is a lightlike circle. $M=\fS^1\times \mathcal N^3/\Delta$ where
$\displaystyle S^1\ra M\lra \fS^1\times T^2$ is a nontrivial principal
bundle over the affine torus with euler number $k\in \ZZ$. Moreover,
$\fS^1$ is spacelike so $M$ coincides with $(2)$ of case ${\rm (ii)}$.
\end{itemize}

\end{pro}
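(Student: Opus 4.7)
The plan is to upgrade the central translation subgroup $\ZZ\leq\Gamma$ furnished by Proposition~\ref{4-center} into a global causal circle action on $M$ (after a finite cover), and then to classify the resulting principal $S^1$-bundle structures. Let $v\in\RR^4$ generate this $\ZZ$. Centrality forces $L(\ga)v=v$ for every $\ga\in\Gamma$, so the $v$-translation descends to a nowhere-vanishing parallel causal vector field on $M$ whose flow defines the desired $S^1$-action; its causal type is dictated by the sign of $\langle v,v\rangle$, producing the three cases.

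In the nondegenerate cases, the stabilizer of $v$ in $\rO(3,1)$ is $\rO(3)$ (timelike) or $\rO(2,1)$ (spacelike), so $\Gamma$ embeds into $\RR\times\Iso(v^\perp)$. Projecting onto the $\RR$-factor is a genuine \emph{homomorphism} $\pi_1:\Gamma\to\RR$ with discrete image $\ZZ$, which splits the central extension $1\to\ZZ\to\Gamma\to\varphi(\Gamma)\to 1$ so that, up to a further finite cover, $M$ is a metric product $S^1\times N^3$. In the timelike subcase, Bieberbach's theorem reduces $\varphi(\Gamma)$ to a translation lattice of $\RR^3$ and $N^3=T^3$, giving (i). In the spacelike subcase, the known classification of compact complete Lorentzian flat $3$-manifolds identifies $N^3$ as one of $T^3$, $\mathcal{N}^3/\Delta$, or $\mathcal{R}/\pi$, giving the three subcases of (ii).

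The lightlike case is where the argument genuinely diverges. Now $v\in v^\perp$, and the stabilizer of $v$ in $\rO(3,1)$ is $\RR^2\rtimes\rO(2)$, namely the subgroup of \eqref{similaritygroup} with $\lambda=1$; it acts by Euclidean isometries on the transverse plane $v^\perp/\RR v\cong\RR^2$. The invariant flag $\RR v\subset v^\perp\subset\RR^4$ organises $M$ as a principal $S^1$-bundle over a $3$-manifold which itself fibers affinely as $\fS^1\times T^2$. The $v$-coordinate on $\Gamma$ is no longer a homomorphism, because conjugation by elements shearing transversely introduces a genuine $2$-cocycle; the plan is to read off the Euler number $k\in\ZZ$ directly from the matrix form \eqref{similaritygroup}, thereby identifying $M$ as a nilmanifold $\fS^1\times\mathcal{N}^3/\Delta$. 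A last explicit calculation of $\langle\cdot,\cdot\rangle$ on the generator of the $\fS^1$ factor shows it to be spacelike, so $M$ also appears in case (ii)(2) under this second causal circle action.

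I expect the lightlike case to be the main obstacle: one must both extract the Euler number from a genuinely non-split central extension and then reconcile the resulting $4$-manifold with the spacelike list by recognising that the same underlying $M$ supports two inequivalent causal circle foliations.
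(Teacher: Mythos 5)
Your overall strategy---start from the central translation subgroup of Proposition \ref{4-center}, separate the three causal types of the generator $v$, and reduce to three-dimensional classification results---is the same as the paper's. But two steps do not hold up. In the nondegenerate cases you assert that the $v$-coordinate $\pi_1\colon\Gamma\to\RR$ is a homomorphism \emph{with discrete image $\ZZ$}, and you use this to split the central extension and exhibit $M$ as a metric product $S^1\times N^3$. The homomorphism claim is fine (it follows from $L(\ga)v=v$), but discreteness of the image is false in general: take $\Gamma$ to be the lattice in $\RR^4$ generated by $e_1$, $\sqrt2\,e_1+e_2$, $e_3$, $e_4$ with $v=e_1$ timelike and central. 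Then $\pi_1(\Gamma)=\ZZ+\sqrt2\,\ZZ$ is dense in $\RR$, and $\ZZ v\oplus(\Gamma\cap v^\perp)$ has rank $3$, so there is no metric splitting, and passing to a finite cover does not repair it. The diffeomorphism statement you want is still true, but it cannot be reached this way. The paper avoids the issue entirely: in the timelike case the Bieberbach theorem forces $\Gamma\leq\RR^4$ outright, and in the spacelike case the product structure is extracted from Wall's classification of four-dimensional nil- and solvmanifolds by comparing the rank of the center (to single out $S^1\times\mathcal N/\Delta$ against ${\rm Nil}^4/\Gamma$) or the commutator subgroup (to single out $S^1\times\mathcal R/\pi$ among the ${\rm Sol}^4$-types). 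Some substitute of this kind is exactly what your argument is missing.

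The lightlike case as written is a plan rather than a proof. The essential inputs---that $\Gamma$ is nilpotent with ${\rm Rank}\,C(\Gamma)=2$, so that a second, spacelike central circle $\fS^1$ exists and $M$ is the nilmanifold $\fS^1\times\mathcal N^3/\Delta$---are asserted but never derived; without the rank-two center you can neither produce the $\fS^1$ factor nor set up the principal bundle $S^1\to M\to\fS^1\times T^2$ whose Euler number you propose to compute. The paper obtains these facts from Fried's classification of simply transitive actions on Lorentzian $\RR^4$ (or an ad hoc argument) and then writes down the explicit simply transitive nilpotent group, from which the bundle structure, the Euler number $k$, and the spacelike character of $\fS^1$ are read off. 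You correctly identify the vector stabilizer $\RR^2\rtimes\rO(2)$ inside \eqref{similaritygroup} and the invariant flag $\RR v\subset v^\perp\subset\RR^4$, but that alone does not rule out, say, a non-nilpotent $\Gamma$ preserving the flag, so the reduction to the nilmanifold case still has to be argued.
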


\begin{proof}
According to whether $\RR$ is timelike or spacelike, we see that the
induced action is Euclidean $\varphi: Q\ra {\rm E}(3)$ or Lorenztian
$\varphi: Q\ra {\rm E}(2,1)$ respectively. Moreover, we have a
decomposition $\RR^4=\RR\times \RR^3$ with respect to the Lorentz
inner product. Then the formula of \eqref{ga-matrix1} becomes:
\begin{equation}\label{ga-matrix2}
\ga=(\left[\begin{array}{c}
a \\
b\\
\end{array}\right],
\left(\begin{array}{cc}
 1 &0\\
 0 &B\\
\end{array}\right)).
\end{equation}For $\varphi(Q)\leq {\rm E}(3)$,
it follows $\displaystyle\varphi(Q)\leq \RR^3$ up to finite index by
the Bieberbach Theorem and hence $\displaystyle
\ga=(\left[\begin{array}{c}
a \\
b\\
\end{array}\right], I)$. As a consequence, $\Gamma\leq \RR^4$.
This shows (i).\\

For $\varphi(Q)\leq {\rm E}(2,1)$, we assume $\varphi(Q)$ is
torsionfree. It is known  that a compact Lorentzian flat
$3$-manifold $\RR^3/\varphi(Q)$ is $T^3$, a Heisenberg nilmanifold
$\mathcal N/\Delta$ or a solvmanifold $\mathcal R/\pi$. (For
example, \cite{fg},\cite{kam6}.) When $\RR^3/\varphi(Q)=\mathcal
N/\Delta$, the center $\RR$ of $\mathcal N$ is the translation
subgroup consisting of $\displaystyle \langle\left[\begin{array}{c}
b_1\\
0\\
0\\
\end{array}\right]\rangle$. The corresponding subgroup $\Delta$ in $\Gamma$
belongs to the translation subgroup $\displaystyle \langle
\Bigl(\left[\begin{array}{c}
a \\
b_1\\
0\\
0\\
\end{array}\right], I\Bigr)\rangle$. It is easy to see that $\Delta$ is a
 central subgroup of {\rm rank}\, $2$.\\
On the other hand, there are two isomorphism classes of
$4$-dimensional (compact) nilmanifolds which are ${\rm
Nil}^4/\Gamma$ or $S^1\times \mathcal N/\Delta$. They are
characterized as  whether the center $C({\rm Nil}^4)=\RR$ or
$C(\RR\times \mathcal N)=\RR^2$. (See \cite{wa} for the
classification of
 $4$-dimensional Riemannian geometric manifolds in the sense of Thurston,
Kulkarni.)
By this classification, $\RR^4/\Gamma=S^1\times \mathcal N/\Delta$.\\

When $\RR^3/\varphi(Q)=\mathcal R/\pi$, it follows that
$[\pi,\pi]=\ZZ^2$. As $\ZZ\leq \Gamma$ is central, it implies
$[\Gamma,\Gamma]=\ZZ^2$. By the classification \cite{wa} of
$4$-dimensional solvmanifolds, the universal covering group $G$ is
either one of solvable Lie groups of Inoue type ${\rm
Sol}_1^4=\mathcal N\rtimes \RR$, ${\rm Sol}_0^4= \RR^3 \rtimes \RR$,
or ${\rm Sol}_{m,n}^4=\RR^3\rtimes \RR$ $(m\neq n)$,
 $\RR\times \mathcal R$ $(m=n)$.
Therefore $[G,G]=\mathcal N$ or $\RR^3$ except for $\RR\times
\mathcal R$. As $[G,G]=[\mathcal R,\mathcal R]=\RR^2$ for $\RR\times
\mathcal R$,
we obtain  $\RR^4/\Gamma=S^1\times \mathcal R/\pi$.\\

We treat the last case that $\RR$ is lightlike. By an ad-hoc
argument or using the result of \cite{df1}, it is shown that
$\Gamma$ is nilpotent with ${\rm Rank}\,C(\Gamma)=2$. So
$\RR^4/\Gamma=\fS^1\times \mathcal N/\Delta$ again. The universal
cover $\fR\times \mathcal N$ is isomorphic to the semidirect product
of the translation subgroup $\RR^3$ with $\RR$; {\small
\begin{equation*}\label{flathomo}
\RR^3= \Bigl(\left[\begin{array}{c}
a\\
b\\
c\\
0\\
\end{array}\right],I\Bigr),\ \,
\RR=\Bigl(\left[\begin{array}{c}
\mbox{\large$\frac{-t^3}6$}\\
\mbox{\large$\frac{-t^2}2$}\\
0\\
t\\
\end{array}\right],
\left(\begin{array}{cccc}
1&t&0&\mbox{\large$\frac{-t^2}2$}\\
0&1&0&-t\\
0&0&1&0\\
0&0&0&1\\
\end{array}\right)\Bigr).
\end{equation*}
} Hence the lightlike action ${\small\displaystyle
\RR=\left[\begin{array}{c}
a\\
0\\
0\\
0\\
\end{array}\right]}$ lies in $\mathcal N$ and
there is another central group ${\small\displaystyle
\fR=\left[\begin{array}{c}
0\\
0\\
c\\
0\\
\end{array}\right]}$ which constitutes a principal bundle and its quotient:
\begin{equation*}\begin{split}
&\RR\ra \fR\times\mathcal N\lra \fR\times \RR^2,\\
&S^1\ra\RR^4/\Gamma\lra \fS^1\times T^2.
\end{split}
\end{equation*}
As $[\Delta,\Delta]=k\ZZ$ $(\forall\, k\in \ZZ)$, $\displaystyle
S^1\ra\mathcal N/\Delta\lra T^2$ is a circle bundle with euler
number $k\in\ZZ$.

\end{proof}

\begin{remark}\label{4nillike}
For the last case, the translation group is the same
$\RR^3=\RR^3\times 0$ but $\RR$ has other possibilities: {\small
\begin{equation*}
\begin{split}
&\RR=\Bigl(\left[\begin{array}{c}
\mbox{\large$\frac{-t^3}6$}\\
0\\
\mbox{\large$\frac{-t^2}2$}\\
t\\
\end{array}\right],
\left(\begin{array}{cccc}
1&0&t&\mbox{\large$\frac{-t^2}2$}\\
0&1&0&0\\
0&0&1&-t\\
0&0&0&1\\
\end{array}\right)\Bigr),\\
&\RR=\Bigl(\left[\begin{array}{c}
\mbox{\large$\frac{-t^3}6$}\\
\mbox{\large$\frac{-t^2}2$}\\
\mbox{\large$\frac{-t^2}2$}\\
t\\
\end{array}\right],
\left(\begin{array}{cccc}
1&t&t&\mbox{\large$\frac{-t^2}2$}\\
0&1&0&-t\\
0&0&1&-t\\
0&0&0&1\\
\end{array}\right)\Bigr).
\end{split}
\end{equation*}
}
\end{remark}

\section{Conformally flat Lorentzian manifold}\label{para}
Recall that the stabilizer of ${\rm PO}(m+2,2)$ at the point
$\hat\infty\in S^{m+1,1}$ is isomorphic to
\[{\rm PO}(m+2,2)_{\hat\infty}=
\RR^{m+2}\rtimes ({\rm O}(m+1,1)\times \RR^+)=
{\rSim_L}(\RR^{m+2}).
\]
Since a maximal amenable subgroup of ${\rm O}(m+1,1)$
 is isomorphic to ${\rm O}(m+1,1)_{\infty}$
 or ${\rm O}(m+1,1)_{0}$,
a maximal amenable Lie subgroup of ${\rm PO}(m+2,2)$ is isomorphic
to either one of the following groups:
\begin{equation}\label{amen}\begin{split}
{\bf (i)}\, \ \ &
\RR^{m+2}\rtimes ({\rm Sim}(\RR^m)\times
\ZZ_2)\times \RR^+.\\
{\bf (ii)}\, \ \ & \RR^{m+2}\rtimes ({\rO}(m+1)\times \ZZ_2)\times
\RR^+.
\end{split}\end{equation}

\begin{definition}\label{FL}
 An $m+2$-manifold
is said to be a Lorentzian parabolic manifold if it admits a
${\Sim}(\RR^m)\times \RR^+$-structure. \end{definition}

As to Case ${\bf (ii)}$, we have

\begin{pro}\label{Lsimilaritymanifold0}
Let $M$ be an $m+2$-dimensional compact conformally flat
Lorentzian manifold whose holonomy group belongs to
$G=\RR^{m+2}\rtimes ({\rO}(m+1)\times \ZZ_2)\times \RR^+$. Then $M$
is finitely covered by the Lorentz model $S^1\times S^{m+1}$, a Hopf
manifold $S^{m+1}\times S^1$, or a torus $T^{m+2}$.
\end{pro}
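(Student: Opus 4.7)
The plan is a two-case dichotomy on whether the developing image meets the fixed point $\hat\infty\in S^{m+1,1}$, combined with Fried's theorem on closed similarity manifolds. A crucial preliminary observation is that since ${\rO}(m+1)\times\ZZ_2\cong{\rO}(m+1)\times{\rO}(1)\hookrightarrow{\rO}(m+2)$, we have
\[
G \;=\; \RR^{m+2}\rtimes({\rO}(m+1)\times\ZZ_2)\times\RR^{+}\;\subset\;\RR^{m+2}\rtimes({\rO}(m+2)\times\RR^{+})\;=\;\Sim(\RR^{m+2}).
\]
Thus on the affine chart $\RR^{m+2}\subset S^{m+1,1}-\{\hat\infty\}$ a $(G,\RR^{m+2})$-structure is simultaneously a Lorentzian similarity structure and a Riemannian similarity structure. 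Let $(\rho,\dev):(\pi_1(M),\tilde M)\to(G,S^{m+1,1})$ be the developing pair and $\Gamma=\rho(\pi_1(M))$.

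\emph{Case A:} $\dev(\tilde M)\subset S^{m+1,1}-\{\hat\infty\}$. Since $G$ preserves the affine chart $\RR^{m+2}$ and $\dev$ is $\Gamma$-equivariant, an Ehresmann reduction puts the developing image inside $\RR^{m+2}$, so $M$ is a compact Lorentzian similarity manifold and, by the preliminary observation, a compact Euclidean similarity manifold as well. Now apply Fried's theorem \cite{df}: if the scaling homomorphism $L(\Gamma)\to\RR^{+}$ is trivial, then $\Gamma\subset\rE(m+2)$, the manifold is complete, and Bieberbach's theorem gives that $M$ is finitely covered by the torus $T^{m+2}$; otherwise $\dev$ is a covering onto $\RR^{m+2}\setminus\{0\}$ and $M$ is finitely covered by the Hopf manifold $S^{m+1}\times S^1$.

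\emph{Case B:} $\hat\infty\in\dev(\tilde M)$. The set $\dev^{-1}(\hat\infty)$ is $\Gamma$-invariant and discrete by local diffeomorphism of $\dev$. I would prove that $\dev$ is a covering onto $S^{m+1,1}$: openness of $\dev(\tilde M)$ is automatic, and closedness should follow by combining the $\Gamma$-invariance of $\hat\infty$ with compactness of $M$ and the homogeneity of $S^{m+1,1}$ under $\PO(m+2,2)$, giving surjectivity; a path-lifting argument then upgrades this to a covering. Consequently $\tilde M\cong\tilde S^{m+1,1}=\RR\times S^{m+1}$, and $\Gamma$ lifts to a cocompact discrete subgroup acting on $\RR\times S^{m+1}$; passing to a finite-index subgroup preserving orientations and each factor, $M$ is finitely covered by the Lorentz model $S^1\times S^{m+1}$.

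The main obstacle is Case B: establishing that $\dev$ is a covering onto $S^{m+1,1}$ is delicate because the standard Thurston--Ehresmann criterion demands properness, which is not directly available here. The key will be to exploit that $\Gamma$ is amenable, hence virtually solvable by Tits' alternative, together with the fact that the $\Gamma$-invariant set $\dev^{-1}(\hat\infty)$ descends to a finite subset of $M$; this should allow one to extend the covering equivariantly from a neighborhood of the fibre $\dev^{-1}(\hat\infty)$ and identify $M$ with a compact quotient of $\RR\times S^{m+1}$.
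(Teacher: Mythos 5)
There is a genuine gap, and it begins with your dichotomy. You treat $S^{m+1,1}-\{\hat\infty\}$ as if it were the affine chart $\RR^{m+2}$, but --- as the paper stresses in its introduction --- in conformally flat Lorentzian geometry the equivariant copy of $\RR^{m+2}$ is \emph{properly} contained in $S^{m+1,1}-\{\hat\infty\}$: the complement $W=S^{m+1,1}-\RR^{m+2}$ is an entire hypersurface (the lightcone through $\hat\infty$), not a single point. Consequently your Case A reduction (``$\dev(\tilde M)$ misses $\hat\infty$, hence the image lies in $\RR^{m+2}$'') is false, and your two cases do not even cover the situation where the developing image meets $W-\{\hat\infty\}$ while missing $\hat\infty$. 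The paper instead sets $X=\tilde M-{\rm Dev}^{-1}(W)$, observes that $X/\pi$ carries a Riemannian similarity structure but is \emph{not necessarily compact} (so Fried's theorem on closed similarity manifolds cannot be invoked the way you do), and only at the end shows $\tilde M=X$ by a properness argument: ${\rm Dev}(\tilde M-X)$ would be a compact $\Gamma$-invariant subset of $W$ on which the infinite group $\Gamma$ would have to act properly, which is impossible.

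Your Case B is not a proof either: you acknowledge that the covering/surjectivity assertion is the delicate point and offer only the hope that amenability will close it. In the paper the Lorentz model $S^1\times S^{m+1}$ arises only when $\Gamma$ is \emph{finite} (then $\Gamma\leq\rO(m+1)\times\ZZ_2$ and $P\circ\dev$ is automatically a covering of the compact model); when $\Gamma$ is infinite the paper runs a complete/incomplete dichotomy on $X$, and in the incomplete case uses Fried's invariant affine subspace $I=\RR^\ell$ together with the identification $\RR^{m+2}-\RR^\ell=\HH^{\ell+1}\times S^{m-\ell+1}$ and the Covering Property Lemma (existence of a complete invariant Riemannian metric on the complement) to force $\Gamma\leq\rO(m+2)\times\RR^+$ fixing the origin, and only then obtains the Hopf manifold from ${\rm Dev}:\tilde M\ra\RR^{m+2}-\{0\}$. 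None of these steps is present in, or replaceable by, what you wrote, so the proposal as it stands does not establish the proposition.
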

\begin{proof} There exists a developing pair:
\begin{equation}\label{deve}
\begin{split}
(P\circ \rho,P\circ \dev):(\pi_1(M),\tilde M)&\ra ({\rm
O}(m+2,2)^\sim,\RR\times
S^{m+1})\\
&\ra ({\rm PO}(m+2,2), S^{m+1,1}).
 \end{split}\end{equation}
 By the hypothesis, $\Gamma=P\circ \rho(\pi_1(M))\leq G$. If
$\Gamma$ is a finite subgroup, it follows $\Gamma
\leq{\rO}(m+1)\times \ZZ_2$ so that $P\circ\dev: \tilde M\ra
S^{m+1,1}$ is a covering map. Thus $\displaystyle
(\rho,\dev):(\pi_1(M),\tilde M)\lra (\rho(\pi_1(M)),\RR\times
S^{m+1})$ is an equivariant diffeomorphism. There is a group
extension $\displaystyle 1\ra \ZZ\ra \rho(\pi_1(M))\lra \Gamma\ra 1$
associated to the covering of $\displaystyle \RR\times
S^{m+1}/\ZZ=S^{m+1,1}$. Then $M$ is diffeomorphic to $\RR\times
S^{m+1}/\rho(\pi_1(M))= S^{m+1,1}/\Gamma$.

Suppose that $\Gamma$ is infinite. Recall the equivariant embedding
of $({\rm Sim}_L(\RR^{m+2}),\RR^{m+2})$ into $({\rm
PO}(m+2,2),S^{m+1,1})$ in which $\RR^{m+2}$ is a dense open subset
in $S^{m+1,1}$. The complement $W=S^{m+1,1}-\RR^{m+2}$ consists of
the hypersurface. (See \cite{bcdgm}.) Put ${\rm Dev}=P\circ \dev$
and
\begin{equation}\label{comp}
X=\tilde M - {\rm Dev}^{-1}(W).
\end{equation}Then the developing pair reduces:
\begin{equation}\label{dever}
(\Phi,{\rm Dev}):(\pi,X)\ra (\Gamma,\RR^{m+2})
\end{equation}where $\Gamma\leq G$. Here we put
$\pi=\pi_1(M)$, $\Phi=P\circ \rho$. Since ${\rO}(m+1)\times
\ZZ_2\leq {\rO}(m+2)$, $X/\pi$ is endowed with the usual
similarity structure.\\

\noindent {\bf Case 1.}\ If $X$ is \emph{geodesically complete} with
respect to the pull-back metric of the standard euclidean metric on
$\RR^{m+2}$, then ${\rm Dev}$ is a covering map of $X$ onto
$\RR^{m+2}$ and so ${\rm Dev}$ is a diffeomorphism. Thus $\Gamma$
acts properly discontinuously on $\RR^{m+2}$ so that $\Gamma\leq
\RR^{m+2}\rtimes ({\rO}(m+1)\times \ZZ_2)$, \ie there is no
component in $\RR^+$. $X/\Gamma$ is diffeomorphic to a euclidean
space form $\RR^{m+2}/\Gamma$. Since $\RR^{m+2}$ is dense in
$S^{m+1,1}$, if $\tilde M-X\neq \emptyset$, then ${\rm Dev}:\tilde
M-X\ra {\rm Dev}(\tilde M-X)$ is a homeomorphism. Then $\Gamma$ acts
properly discontinuously on ${\rm Dev}(\tilde M-X)\subset W$. Let
$\Lambda={\rm Dev}(\tilde M-X)$. Since $\Lambda$ is a
$\Gamma$-invariant closed subset (and so compact), every orbit
$\Gamma\cdot x$ for each $x\in \Lambda$ has an accumulation point in
$\Lambda$, so $\Gamma$ cannot act properly on $\Lambda$. Therefore,
$\Lambda={\rm Dev}(\tilde M-X)=\emptyset$ or $\tilde M=X$. Thus $M$
is diffeomorphic to a compact euclidean space form
$\RR^{m+2}/\Gamma$.

Hence $M$ is finitely covered by an $m+2$ -torus
$T^{m+2}=\RR^{m+2}/\ZZ^{m+2}$.\\

\noindent {\bf Case 2.}\ Suppose that a similarity manifold
$X$ is not (geodesically) complete.
It follows from Fried's theorem \cite{df} that there exists a
$\Gamma$-invariant closed (affine) subspace $I$ in $\RR^{m+2}$ which
lies outside the developing image ${\rm Dev}(X)$. (Note that a
similarity manifold $X/\pi$ is not necessarily compact.) In this
case, some element of $\Gamma$ has nontrivial $\RR^+$-summand in
 $G=\RR^{m+2}\rtimes ({\rO}(m+1)\times \ZZ_2\times \RR^+)$. After
conjugation by such element we may assume $0\in I$.

Put the vector subspace $I=\RR^\ell$ in $\RR^{m+2}$ $(\ell< m+2)$.
Since $I$ is closed, the closure $\bar\Gamma\leq G$ leaves the
complement $\RR^{m+2}-\RR^\ell$ invariant. This implies
\begin{equation}\label{group-s}
\begin{CD}
\bar\Gamma\, \leq \, \RR^\ell\rtimes ({\rO}(\ell)\times \RR^+)\times
{\rO}(m-\ell+1)\, \leq\, G\\
 ||@. @.\\
{\rm Sim}(\RR^{\ell}) \times {\rO}(m-\ell+1) @.\\
 \cap@. @.\\
{\rm PO}(\ell+1,1)\times {\rO}(m-\ell+2). @.\\
\end{CD}\end{equation}
Using the real
hyperbolic geometry $({\rm PO}(m+3,1), S^{m+2})$, it can be viewed as

\[\RR^{m+2}-\RR^\ell=S^{m+2}-S^{\ell}=
\HH^{\ell+1}\times S^{m-\ell+1}.\]
The subgroup of ${\rm
PO}(m+3,1)$ preserving this complement
is isomorphic to ${\rm
PO}(\ell+1,1)\times {\rO}(m-\ell+2)$.
Thus $\HH^{\ell+1}\times S^{m-\ell+1}=\RR^{m+2}-\RR^\ell$
admits a complete Riemannian metric
which is invariant under this transitive group of isometries.
In particular any closed subgroup acts properly on
$\RR^{m+2}-\RR^\ell$.

\begin{lemma}[Covering property]\label{complete}
$X$ admits a $\pi$-invariant Riemannian metric such that ${\rm
Dev}:X\ra \RR^{m+2}-\RR^\ell$ is a covering map. \end{lemma}
\begin{proof}
As ${\rm Dev}(X)$ lies outside $I=\RR^\ell$, it restricts the developing
image ${\rm Dev}:X\ra \RR^{m+2}-\RR^\ell$.
Since $\bar\Gamma$ acts properly on $\RR^{m+2}-\RR^\ell$, choose a
$\bar\Gamma$- invariant Riemannian metric on $\RR^{m+2}-\RR^\ell$
such that ${\rm Dev}:X\ra \RR^{m+2}-\RR^\ell$ is a local isometry with
respect to the pullback metric of $\RR^{m+2}-\RR^\ell$.
Let $P:\tilde M\ra M$ be the covering projection.
 As the pullback metric on $X$ is $\pi$-invariant, the (restricted)
 projection $P:X\ra X/\pi$ induces a Riemannian metric on $X/\pi$.

Let $\{x_j\}$ be a Cauchy sequence in $X/\pi$. Since $X/\pi\subset
M$ which is compact, $\displaystyle\mathop{\lim}_{j\ra \infty}
{x_j}=w\in M$. Choose a point $\tilde w\in \tilde M$ and
neighborhoods $U(\tilde w)\subset \tilde M$, $U(w)\subset M$ such
that $P :U(\tilde w)\ra U(w)$ is a homeomorphism with $P(\tilde
w)=w$. Let $\{\tilde x_j\}\subset U(\tilde w)$ be a sequence such
that $P(\tilde x_j)=x_j$ and $\displaystyle\mathop{\lim}_{j\ra
\infty}{\tilde x_j}=\tilde w$. As $P\colon U(\tilde w)\cap X\ra
U(w)\cap X/\pi$ is an isometry, $\{\tilde x_j\}$ is also Cauchy.
 Since the sequence $\{{\rm Dev}(\tilde x_j)\}$ is Cauchy
in $\RR^{m+2}-\RR^\ell$ where $\RR^{m+2}-\RR^\ell$ is complete,
$\displaystyle\mathop{\lim}_{j\ra \infty} {\rm Dev}(\tilde x_j)=a\in
\RR^{m+2}-\RR^\ell$. As $\displaystyle\mathop{\lim}_{j\ra
\infty}{\tilde x_j}=\tilde w$, ${\rm Dev}(\tilde w)=a$. So $\tilde
w\in X$ (because $\tilde M-X={\rm Dev}^{-1}(W)$ and $a\notin
W=S^{m+1,1}-\RR^{m+2}$) and hence $P(\tilde w)=w\in X/\pi$, $X/\pi$
is complete. So $X$ is complete, ${\rm Dev}:X\ra \RR^{m+2}-\RR^\ell$
is a covering map.

\end{proof}

The proof of Lemma \ref{complete}
works when $\RR^\ell$ is replaced by the following space $Y$.

\begin{pro}\label{comleteremark}
Let $Y$ be a $\Gamma$-invariant closed subset such that
the complement $\RR^{m+2}-Y$ admits a $\Gamma$-invariant
complete Riemannian metric.
If $(\Psi, {\rm Dev}) : (\pi,X)\ra (\Gamma,\RR^{m+2}-Y)$
 is a developing pair, then
${\rm Dev} : X\ra \RR^{m+2}-Y$ is a covering map.
\end{pro}

From Lemma \ref{complete}, if $\ell\neq m$, ${\rm Dev}:X\ra
\RR^{m+2}-\RR^\ell$ is a homeomorphism so $\Gamma$ is discrete. If
we recall that $\Gamma$ has a nontrivial summand in $\RR^+$ ({\bf
Case (2)}), \eqref {group-s} implies
\begin{equation}\label{lnotm}
\Gamma \leq {\rO}(\ell)\times
\RR^+ \times {\rO}(m-\ell+1)\leq {\rO}(m+2)\times\RR^+.
\end{equation}\\

If $\ell=m$, then ${\rm Dev}:X\ra \RR^{m+2}-\RR^m=
\HH^{m+1}_\RR\times S^{1}$ is a covering map such that
$\displaystyle \Gamma\leq {\rm Sim}(\RR^{m}) \times{\rO}(1)$ by
Lemma \ref{complete}. Let $p:\HH^{m+1}_\RR\times\RR^{1}\ra
\HH^{m+1}_\RR\times S^1$ be the projection. If $\widetilde {\rm
Dev}:X\ra \HH^{m+1}_\RR\times \RR^{1}$ is a lift of ${\rm Dev}$,
then it is a diffeomorphism so that the conjugate group $\tilde
\Gamma=\widetilde {\rm Dev}\circ \pi\circ\widetilde{\rm Dev}^{-1}$
acts properly discontinuously on $\HH^{m+1}_\RR\times \RR^{1}$.
Moreover, associated with the infinite covering of
$\HH^{m+1}_\RR\times S^{1}$, there is the commutative diagram:
{\scriptsize
\begin{equation*}\label{group-dis}
\begin{CD}
1@>>> \ZZ@>>>{\rm Sim}(\RR^{m})\times (\ZZ\rtimes {\rO}(1))
@>p>>{\rm Sim}(\RR^{m})\times{\rO}(1)@>>> 1\\
@.@. @AAA @AAA \\
@.@. \tilde \Gamma @>p>> \Gamma@>>> 1\\
\end{CD}\end{equation*}
}

Since $\tilde \Gamma$ is discrete and has a nontrivial summand in $\RR^+$
(because so is $\Gamma$), it follows
$\tilde\Gamma\leq {\rO}(m)\times \RR^+\times (\ZZ\rtimes {\rO}(1))$
which shows
\begin{equation}\label{lnotm1}
\Gamma\leq {\rO}(m)\times \RR^+\times {\rO}(1)\leq {\rO}(m+2)\times\RR^+.
\end{equation}
For both of \eqref{lnotm}, \eqref{lnotm1}, $\Gamma$ fixes $0$ such
that the complement $\RR^{m+2}-\{0\}= S^{m+1}\times\RR^+$ admits a
complete Riemannian metric invariant under ${\rO}(m+2)\times\RR^+$.
Applying Proposition \ref{comleteremark},
 $(\Phi, {\rm Dev}) : (\pi,X)\ra\, (\Gamma,\RR^{m+2}-\{0\})$
  is an equivariant covering map. Hence
${\rm Dev}: X\ra\, \RR^{m+2}-\{0\}$ is a diffeomorphism. On the
other hand, we can show that $\Lambda={\rm Dev}(\tilde
M-X)=\emptyset$ as in the argument of {\bf Case 1}, ${\rm
Dev}:\tilde M\ra \RR^{m+2}-\{0\}$ is a diffeomorphism. Hence $M$ is
finitely covered by a Hopf manifold $S^{m+1}\times S^{1}$. In fact,
$\displaystyle
M\cong\RR^{m+2}-\{0\}/\Gamma=S^{m+1}\times\RR^+/\Gamma=
S^{m+1}\mathop{\times} S^{1}/F$ where $F$ is a finite group of
$({\rO}(m)\times \ZZ_2)\times S^1$ acting freely.

\end{proof}

\begin{theorem}\label{Lsimilaritymanifold1}
Let $M$ be an $m+2$-dimensional compact conformally flat Lorentzian
manifold whose holonomy group is a virtually solvable subgroup lying
in ${\rSim_L}(\RR^{m+2})$.
 Then $M$ is either a conformally flat Lorentzian
parabolic manifold or finitely covered by the Lorentz model
 $S^1\times S^{m+1}$, a Hopf manifold $S^{m+1}\times S^1$,
or a torus $T^{m+2}$.
\end{theorem}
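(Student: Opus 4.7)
The plan is to show that, up to conjugation inside ${\rSim_L}(\RR^{m+2})$, the virtually solvable holonomy $\Gamma$ of $M$ lies in one of the two maximal amenable Lie subgroups (i) or (ii) enumerated in \eqref{amen}; the conclusion of the theorem then follows from Proposition \ref{Lsimilaritymanifold0} (for case (ii)) and from Definition \ref{FL} (for case (i)).

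First I would pass, by Selberg's lemma, to a torsion-free solvable subgroup $\Gamma_0$ of finite index in $\Gamma$, and consider the projection $P\colon {\rSim_L}(\RR^{m+2})\ra {\rO}(m+1,1)$ from the exact sequence \eqref{holexact}. The argument then splits along the same discrete/indiscrete dichotomy used in the proof of Proposition \ref{lsim12}. If $P(\Gamma_0)$ is indiscrete, then by \cite[Theorem 8.24]{ra} the identity component $\overline{P(\Gamma_0)}^{0}$ of its closure is connected and solvable in ${\rO}(m+1,1)$, and so lies, after conjugation, in the parabolic stabilizer $\RR^m\rtimes({\rO}(m)\times \RR^*)$; the normalizer step reproduced in Proposition \ref{lsim12} then promotes this to $P(\Gamma)\leq \RR^m\rtimes({\rO}(m)\times \RR^*)$, placing $\Gamma$ in case (i) of \eqref{amen}.

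If $P(\Gamma_0)$ is discrete, it acts properly discontinuously on the hyperbolic space $\HH^{m+1}_\RR$ as a virtually solvable group of isometries. By the Tits alternative, any non-elementary discrete subgroup of ${\rm Isom}(\HH^{m+1}_\RR)$ contains a non-abelian free subgroup, so $P(\Gamma_0)$ must be elementary and admit a finite orbit on $\overline{\HH^{m+1}_\RR}$. If this orbit lies in the interior, then the barycenter is a common fixed point and $P(\Gamma)$ (not just $P(\Gamma_0)$) is conjugate into the compact subgroup ${\rO}(m+1)$, putting $\Gamma$ into case (ii). If the orbit instead lies on the ideal boundary $\partial\HH^{m+1}_\RR=S^m$, then after possibly replacing $\Gamma$ by an index-two subgroup to eliminate a loxodromic swap of two boundary fixed points, $P(\Gamma)$ sits in the parabolic stabilizer $\RR^m\rtimes({\rO}(m)\times \RR^*)$, again putting $\Gamma$ into case (i).

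With the dichotomy established, case (ii) is handled directly by Proposition \ref{Lsimilaritymanifold0}, yielding the finite coverings of $M$ by $S^1\times S^{m+1}$, $S^{m+1}\times S^1$, or $T^{m+2}$; in case (i), $\Gamma\leq \RR^{m+2}\rtimes (\rSim^*(\RR^m)\times \RR^+)$, which by Definition \ref{FL} identifies $M$ as a conformally flat Lorentzian parabolic manifold. The main obstacle is careful bookkeeping of the finite-index passages so that the structure-group containment in case (i) is recorded for $\Gamma$ itself rather than only for $\Gamma_0$, and handling the elementary classification uniformly across elliptic, parabolic, and loxodromic behaviour; the core hyperbolic-geometry input (the Tits alternative for discrete subgroups of ${\rO}(m+1,1)$) is standard.
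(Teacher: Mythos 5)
Your proof is correct and follows essentially the same route as the paper: reduce the virtually solvable holonomy to one of the two maximal amenable cases {\bf (i)}, {\bf (ii)} of \eqref{amen}, then invoke Definition \ref{FL} for case {\bf (i)} and Proposition \ref{Lsimilaritymanifold0} for case {\bf (ii)}. The only difference is that the paper asserts the reduction to {\bf (i)}/{\bf (ii)} directly from its classification of maximal amenable subgroups of ${\rm PO}(m+2,2)$, whereas you justify it via the discrete/indiscrete dichotomy and the elementary-group classification --- a legitimate filling-in of the step the paper leaves implicit, and the same dichotomy the paper itself uses in Proposition \ref{lsim12}.
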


\begin{proof}
Given a compact conformally flat Lorentzian $(m+2)$-manifold $M$,
there exists a developing pair
\begin{equation}\label{deve}
\begin{split}
(P\circ \rho,P\circ \dev):(\pi_1(M),\tilde M)&\ra \,({\rm
O}(m+2,2)^\sim,\RR\times
S^{m+1})\\
&\ra\, ({\rm PO}(m+2,2), S^{m+1,1}). \end{split}\end{equation}
 Denote ${\Aut}(T_{\hat\infty}S^{m+1,1})$
the automorphism group of $T_{\hat\infty}S^{m+1,1}$ where
$T_{\hat\infty}S^{m+1,1}$ is the tangent space of $S^{m+1,1}$ at
${\hat\infty}$. Let $L:{\rSim_L}(\RR^{m+2})= {\rm
PO}(m+2,2)_{\hat\infty}\ra\, {\rm O}(m+1,1)\times \RR^+$ be the
projection as before such that $\displaystyle {\rm O}(m+1,1)\times
\RR^+\leq \mathop{\Aut}(T_{\hat\infty}S^{m+1,1})$. As
$\Gamma=P\circ\rho(\pi_1(M))$ is virtually solvable in
${\rSim_L}(\RR^{m+2})$, there are two possibilities {\bf (i)}, {\bf
(ii)} as in \eqref{amen}, \ie the structure group $L(\Gamma)$
belongs to either ${\rm Sim}(\RR^m)\times \ZZ_2\times \RR^+$ or
${\rO}(m+1)\times \ZZ_2\times \RR^+$. By Definition \ref{FL} (\cf
\cite{kam6}), the case {\bf (i)} implies that $M$ is a conformally
flat Lorentzian parabolic manifold. For the case {\bf (ii)}, it
follows $\Gamma\leq \RR^{m+2}\rtimes ({\rO}(m+1)\times \ZZ_2)\times
\RR^+$. Hence the assertion follows from Proposition
\ref{Lsimilaritymanifold0}.

\end{proof}

\begin{remark}\label{simcomplete} We collect several remarks and problems.
\begin{itemize}
\item[(i)] If $M$ is a compact Lorentzian similarity
manifold with virtually solvable holonomy group, then it is easy to
see that $M$ is either a Lorentzian parabolic similarity manifold, a
euclidean space form or a Hopf manifold.
\item[(ii)] As a compact Lorentzian flat manifold is complete by Carriere's
celebrated theorem \cite{ca}, it is a Lorentzian parabolic
similarity manifold by the definition.
\item[(iii)] There is a compact
incomplete Lorentzian similarity $m+2$-manifold whose fundamental
group is isomorphic to $\Gamma\times\ZZ$ where $\Gamma$ is a
torsionfree discrete cocompact isometry subgroup of the hyperboloid
$\HH^{m+1}_\RR$. In particular, the virtual solvability of
$\pi_1(M)$ does not follow from compactness for a Lorentzian
similarity manifold $M$.
\item[(iv)]
 Let $M$ be a compact Lorentzian parabolic similarity
manifold with virtually solvable holonomy group. \emph{Is $M$
complete?} We don't know whether there exists a compact Lorentzian
parabolic similarity manifold other than compact Lorentzian flat
manifolds. See Corollary $\ref{Fpara}$ for compact Fefferman-Lorentz
parabolic similarity manifold.

\end{itemize}
For ${\rm (iii)}$, this is easily obtained by taking the interior of
the cone in $\RR^{m+2}$ which is identified with the product
$\HH^{m+1}_\RR \times \RR^+$ on which the holonomy group
${\rO}(m+1,1)\times \RR^+$ acts transitively.
\end{remark}

\section{Fefferman-Lorentz parabolic structure}\label{parabolic}

Let $\ZZ_2$ be the subgroup of the center $S^1$ in ${\rm U}(n+1,1)$.
Put $\hat{\rm U}(n+1,1)={\rm U}(n+1,1)/\ZZ_2$. The inclusion
$\displaystyle {\rm U}(n+1,1)\ra {\rm O}(2n+2,2)$ defines a natural
embedding $\displaystyle\hat{\rm U}(n+1,1)\ra {\rm PO}(2n+2,2)$.
Then $\hat{\rm U}(n+1,1)$ acts transitively on $S^{2n+1,1}$ so that
$(\hat{\rm U}(n+1,1), S^{2n+1,1})$ is a subgeometry of $({\rm
PO}(2n+2,2), S^{2n+1,1})$.

As in Introduction, a conformally flat \emph{Fefferman-Lorentz
parabolic} manifold $M$ is a $2n+2$ -dimensional smooth manifold
locally modelled on the geometry $({\rm U}(n+1,1),S^1\times
S^{2n+1})$. See \cite{kam6} for details. We observe which subgroup
in ${\Sim}_L(\RR^{2n+2})$ corresponds to \emph{conformally flat
Fefferman-Lorentz parabolic structure}. Let $q: S^{2n+1,1}\ra
S^{2n+1}$ be the projection and $\{\hat\infty\}$ the infinity point
of $S^{2n+1,1}$ which maps to $\{\infty\}$ of $S^{2n+1}$. As a
spherical $CR$-manifold, $S^{2n+1}-\{\infty\}$ is identified with
the Heisenberg Lie group $\mathcal N$. Since the stabilizer is
\[{\rm PO}(2n+2,2)_{\hat\infty}=\RR^{2n+2}\rtimes
 ({\rm O}(2n+1,1)\times \RR^+)
={\Sim}_L(\RR^{2n+2}),\]
the intersection $\hat{\rm U}(n+1,1)\cap{\rm
PO}(2n+2,2)_{\hat\infty}$ becomes $$\hat{\rm
U}(n+1,1)_{\hat\infty}=\mathcal N\rtimes ({\rm U}(n)\times \RR^+).$$

Noting $\displaystyle{\Sim}^*(\RR^{2n})=\RR^{2n}\rtimes ({\rm
O}(2n)\times\RR^*)\leq {\rm O}(2n+1,1)$, it follows
\begin{equation}
\begin{split}
\mathcal N\rtimes ({\rm U}(n)\times \RR^+)&\leq
\RR^{2n+2}\rtimes({\Sim}^*(\RR^{2n})\times \RR^+)\\
&=(\RR^{2n+2}\rtimes \RR^{2n})\rtimes ({\rm O}(2n)\times
\RR^*)\times \RR^+
\end{split}
\end{equation}where $\RR^{2n+2}\rtimes \RR^{2n}$ is a
nilpotent Lie group such that $\mathcal N\leq\RR^{2n+2}\rtimes
\RR^{2n}$. We have shown in \cite{kam6} (Compare \cite{fe}.)
\begin{theorem}\label{CR=Weyl}
A Fefferman-Lorentz manifold $S^1\times N$ is conformally flat
 if and only if $N$ is a spherical $CR$-manifold.
\end{theorem}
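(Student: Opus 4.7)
The proof plan is to exploit the Fefferman construction together with the structural identification between the Fefferman--Lorentz stabilizer and the spherical $CR$ stabilizer. Recall that for a strictly pseudoconvex $CR$ manifold $(N,J,\theta)$ of dimension $2n+1$, the Fefferman construction attaches to $N$ a canonical conformal class of Lorentz metrics on an $S^1$-bundle; in the trivial case this gives $S^1\times N$ with a distinguished nowhere-zero null conformal Killing field $\xi$ tangent to the $S^1$-factor. The key input already set up in the section is the identification
\[
\hat{\rm U}(n+1,1)_{\hat\infty}=\mathcal N\rtimes({\rm U}(n)\times\RR^+),
\]
which is precisely the point stabilizer in the spherical $CR$ geometry $({\rm PU}(n+1,1),S^{2n+1})$ extended by the Fefferman $\RR^+$-direction, and which sits inside ${\rm PO}(2n+2,2)_{\hat\infty}={\Sim}_L(\RR^{2n+2})$.

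For $(\Leftarrow)$, assume $N$ is spherical $CR$, so that one has a developing pair $(\rho_{CR},\dev_{CR}):(\pi_1(N),\tilde N)\ra({\rm PU}(n+1,1),S^{2n+1})$. The Fefferman construction is natural with respect to $CR$ diffeomorphisms and lifts the model $({\rm PU}(n+1,1),S^{2n+1})$ to $(\hat{\rm U}(n+1,1),S^{2n+1,1})$. Lifting $\rho_{CR}$ along the central extension $\hat{\rm U}(n+1,1)\ra{\rm PU}(n+1,1)$ and extending $\dev_{CR}$ by the $S^1$-direction yields a developing pair
\[
(\hat\rho,\widehat{\dev}):(\pi_1(S^1\times N),\widetilde{S^1\times N})\ra(\hat{\rm U}(n+1,1),S^{2n+1,1}).
\]
Since $\hat{\rm U}(n+1,1)\leq{\rm PO}(2n+2,2)$ acts by conformal Lorentz transformations on the flat model $S^{2n+1,1}$, this exhibits $S^1\times N$ as a conformally flat Lorentzian manifold.

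For $(\Rightarrow)$, assume the Fefferman--Lorentz manifold $S^1\times N$ is conformally flat. The $S^1$-factor of the Fefferman construction produces a global null conformal Killing field $\xi$ whose orbits realize $N$ as the quotient. A conformally flat Lorentz developing map $\dev:\widetilde{S^1\times N}\ra S^{2n+1,1}$ must send $\tilde\xi$ to a null conformal Killing field on $S^{2n+1,1}$; the centralizer in ${\rm PO}(2n+2,2)$ of the one-parameter group it generates is exactly the image of $\hat{\rm U}(n+1,1)$. Consequently the holonomy factors through $\hat{\rm U}(n+1,1)$, and $\dev$ is $S^1$-equivariant. Passing to the $S^1$-quotient via $q:S^{2n+1,1}\ra S^{2n+1}$ descends the developing pair to a $({\rm PU}(n+1,1),S^{2n+1})$-developing pair on $N$, which is precisely a spherical $CR$ structure.

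The main obstacle is the normalizer/centralizer computation in $(\Rightarrow)$: one must show that the conformal holonomy preserving the Fefferman null direction is forced into $\hat{\rm U}(n+1,1)$ rather than a larger subgroup of ${\rm PO}(2n+2,2)$. This relies on characterizing the Fefferman $S^1$-action among null conformal Killing fields of $S^{2n+1,1}$ by the geometry of its orbit foliation, whose transverse structure recovers the standard $CR$ structure on $S^{2n+1}$; it is here that one essentially invokes Fefferman's original construction \cite{fe} and the detailed analysis in \cite{kam6}.
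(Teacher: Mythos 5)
The paper does not actually prove this statement here: Theorem \ref{CR=Weyl} is quoted from the author's earlier work \cite{kam6} (``We have shown in \cite{kam6}\dots''), where the equivalence is obtained by relating the conformal curvature of the Fefferman--Lorentz metric to the Chern--Moser curvature of the underlying $CR$ structure, i.e.\ by a local tensorial computation rather than by developing maps. So your proposal is not merely a different route from the paper's argument --- it is an attempt at a proof the paper omits entirely, and it must stand on its own.

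It does not yet stand on its own: the $(\Rightarrow)$ direction has a genuine gap exactly at the step you flag as ``the main obstacle.'' Granting the Lorentzian Liouville theorem, the lift $\tilde\xi$ of the Fefferman circle field corresponds under $\dev$ to \emph{some} element $X$ of $\mathfrak{o}(2n+2,2)$ with null orbits; but $\mathfrak{o}(2n+2,2)$ contains many conjugacy classes of such elements (nilpotent/parabolic ones, null boosts, etc.), and only for the elliptic generator of the center of ${\rm U}(n+1,1)$ is the centralizer $\hat{\rm U}(n+1,1)$ and the orbit quotient the standard $CR$ sphere. Your argument gives no reason why $X$ lands in that conjugacy class: periodicity of the $S^1$-action on $S^1\times N$ does not immediately force the one-parameter subgroup $\exp(tX)$ to be periodic (the developing map need not be injective, and the holonomy of the $S^1$-orbits could be nontrivial), and even among periodic null flows one must identify the correct class. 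This identification is precisely the content of the Sparling-type characterization of Fefferman metrics invoked in \cite{fe} and \cite{kam6}; deferring it to those references means the key implication is assumed rather than proved. Without it, neither the reduction of the holonomy to $\hat{\rm U}(n+1,1)$ nor the descent of $\dev$ to a $({\rm PU}(n+1,1),S^{2n+1})$-developing pair on $N$ follows. The $(\Leftarrow)$ direction is essentially fine, though the global lifting of $\rho_{CR}$ along the central extension has an obstruction you do not address; since conformal flatness is a local property, you should argue locally (local $CR$ equivalence with $S^{2n+1}$ lifts to a local conformal equivalence of Fefferman metrics by naturality, hence the Weyl tensor vanishes), which sidesteps the issue.
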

Note that $S^1$ acts as lightlike isometries on Fefferman-Lorentz
manifolds $S^1\times N$ so does its lift $\RR$ on $\RR\times N$. If
$({\rm U}(n+1,1)^{\sim},\RR\times S^{2n+1})$ is an infinite covering
of $(\hat{\rm U}(n+1,1), S^{2n+1,1})$, then the subgroup $\RR\times
(\mathcal N\rtimes {\rm  U}(n))$ of ${\rm U}(n+1,1)^{\sim}$ acts
transitively on the complement $\RR\times
S^{2n+1}-\RR\cdot{\infty}=\RR\times \mathcal N$. If $\ZZ\times
\Delta$ is a discrete cocompact subgroup of $\RR\times (\mathcal
N\rtimes {\rm  U}(n))$, then we obtain (\cf \cite{kam6})
\begin{pro}\label{prabolicnil}
$S^1\times \cN/\Delta$ is a conformally flat Lorentzian parabolic
manifold on which $S^1$ acts as lightlike isometries.
\end{pro}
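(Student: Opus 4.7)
The plan is to realize $S^1\times\cN/\Delta$ as the quotient of an explicit open piece of the universal Fefferman cover by a properly discontinuous group action, and to read off both the parabolic reduction of the structure group and the lightlike circle action from the group-theoretic description given immediately above the statement.

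First, I would identify $\RR\times\cN$ with the complement $\RR\times S^{2n+1}-\RR\cdot\infty$ via the spherical $CR$-diffeomorphism $S^{2n+1}-\{\infty\}\cong\cN$. The subgroup $\RR\times(\cN\rtimes{\rm U}(n))\leq{\rm U}(n+1,1)^{\sim}$ stabilizes $\RR\cdot\infty$ and acts transitively on this open set, so the hypothesis that $\ZZ\times\Delta$ is discrete and cocompact in it yields a properly discontinuous cocompact action on $\RR\times\cN$: the $\Delta$-action on $\cN$ is proper with compact point stabilizers inside ${\rm U}(n)$, and $\ZZ$ acts by deck translations on $\RR$. After passing to a torsionfree finite-index subgroup by Selberg's lemma if necessary, the action is free and the quotient is exactly $S^1\times\cN/\Delta$.

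Second, the inclusion $(\RR\times\cN)\hookrightarrow(\RR\times S^{2n+1})$ is equivariant for $\RR\times(\cN\rtimes{\rm U}(n))\leq{\rm U}(n+1,1)^{\sim}$, so the quotient inherits a $({\rm U}(n+1,1)^{\sim},\RR\times S^{2n+1})$-structure, hence in particular a conformally flat Fefferman-Lorentz structure. To verify that it is parabolic in the sense of Definition \ref{FL}, I would use the inclusion displayed just above Theorem \ref{CR=Weyl},
\begin{equation*}
\cN\rtimes({\rm U}(n)\times\RR^+)\leq\RR^{2n+2}\rtimes({\rSim}^*(\RR^{2n})\times\RR^+),
\end{equation*}
which places the stabilizer of $\hat\infty$ inside the Lorentz parabolic. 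Consequently the linear part of the holonomy takes values in ${\rSim}^*(\RR^{2n})\times\RR^+$, so the structure is conformally flat Lorentzian parabolic.

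Finally, for the $S^1$-action, the first $\RR$-factor of ${\rm U}(n+1,1)^{\sim}$ is the infinite cyclic cover of the central $S^1\subset\hat{\rm U}(n+1,1)$; by the Fefferman-Lorentz construction reviewed in Theorem \ref{CR=Weyl}, this central circle acts by lightlike isometries on $\RR\times S^{2n+1}$. Being central, its action commutes with $\ZZ\times\Delta$, and therefore descends to a lightlike isometric $S^1=\RR/\ZZ$-action on the quotient. The only piece of bookkeeping is tracking the precise embedding of the Fefferman parabolic stabilizer inside the lift of ${\rSim}_L(\RR^{2n+2})$; this is exactly what is recorded in the paragraph immediately preceding the proposition, so once it is in hand the argument goes through without further obstacle.
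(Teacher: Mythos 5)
Your proof is correct and follows essentially the same route as the paper: the paper gives no separate argument for this proposition (it defers to \cite{kam6}), but the paragraphs of Section 5 immediately preceding it contain exactly the ingredients you assemble — the transitive action of $\RR\times(\cN\rtimes{\rm U}(n))$ on $\RR\times S^{2n+1}-\RR\cdot\infty=\RR\times\cN$ with $\ZZ\times\Delta$ discrete cocompact, the inclusion $\cN\rtimes({\rm U}(n)\times\RR^+)\leq\RR^{2n+2}\rtimes({\rSim}^*(\RR^{2n})\times\RR^+)$ giving the parabolic reduction of the structure group, and the remark that the central circle acts by lightlike isometries on Fefferman-Lorentz manifolds and hence on the quotient. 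Your writeup is a faithful expansion of that implicit argument, with no deviation or gap.
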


\begin{remark}\label{4dimclass}
In ${\rm (iii)}$ of Proposition $\ref{causalclass}$, we saw that a
finite cover of a compact (complete) Lotentzian flat $4$-manifold
admitting a lightlike circle $S^1$ is the nilmanifold $\fS^1\times
\mathcal N^3/\Delta$ with nontrivial circle bundle $S^1\ra
\fS^1\times \mathcal N^3/\Delta\ra \fS^1\times T^2$. The circle
$\fS^1$ acts as spacelike isometries. Therefore, the $4$-nilmanifold
$S^1\times \mathcal N^3/\Delta$ of Proposition $\ref{prabolicnil}$
is not conformal to a Lorentzian flat manifold. In fact, if it
admits a Lorentzian flat structure within the conformal class, $S^1$
would be spacelike as above. But $S^1$ is still lightlike under the
conformal change of the Lorentzian metric, being contradiction.

\end{remark}

\section{Developing maps}\label{holonomydiscrete}

 Suppose that $M$ is a
$2n+2$-dimensional conformally flat Fefferman-Lorentz parabolic
manifold. There is a developing pair:
\begin{equation}\label{devpairS}
(\rho,\dev):(\pi,\tilde M)\ra ({\rm
U}(n+1,1)^{\sim},\tilde S^{2n+1,1}).
\end{equation}
Let
\begin{equation}\label{twomaps}
\begin{split}
q:&({\rm U}(n+1,1)^{\sim},\tilde S^{2n+1,1})\ra (\hat{\rm
U}(n+1,1), S^{2n+1,1}),\\
p:&(\hat{\rm U}(n+1,1), S^{2n+1,1})\ra ({\rm
PU}(n+1,1),S^{2n+1})\\
\end{split}
\end{equation}
be the equivariant projections. Let $\Gamma=\rho(\pi)$ be the
holonomy group of $M$ as before. There is a central group extension:
\begin{equation}\label{exatgamma}
1\ra \RR\ra {\rm U}(n+1,1)^{\sim}\stackrel{p\circ q}\lra {\rm
PU}(n+1,1)\ra 1.\end{equation}

\begin{theorem}\label{th:discrete}
Let $M$ be a compact conformally flat Fefferman-Lorentz parabolic
manifold in dimension $2n+2$. Suppose that the holonomy group
$\Gamma$ is discrete. If the developing map ${\dev}:\tilde
M\ra\tilde S^{2n+1,1}=\RR\times S^{2n+1}$ misses a closed subset
which is invariant under $\RR$ and $\Gamma$,
 then ${\dev}$ is a covering map onto the image.
\end{theorem}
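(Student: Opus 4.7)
The plan is to adapt the scheme of Lemma~\ref{complete} and Proposition~\ref{comleteremark} from the Lorentzian similarity setting to the Fefferman-Lorentz parabolic one. Write $\Omega:=\tilde S^{2n+1,1}-W$, where $W$ is the closed $\RR$- and $\Gamma$-invariant subset missed by $\dev$; then $\dev:\tilde M\to\Omega$. The goal is to equip $\Omega$ with a $\Gamma$-invariant complete Riemannian metric: pulling it back via $\dev$ will yield a $\pi$-invariant metric on $\tilde M$ which descends to $M$, and compactness of $M$ will then force the pull-back to be complete.

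I would first exploit the $\RR$-invariance of $W$. Since the center $\RR\leq {\rm U}(n+1,1)^{\sim}$ acts on $\tilde S^{2n+1,1}=\RR\times S^{2n+1}$ by translation in the first factor, $W=\RR\times W'$ for a closed $W'\subset S^{2n+1}$, and $\Omega=\RR\times(S^{2n+1}-W')$. Under $p\circ q:{\rm U}(n+1,1)^{\sim}\to{\rm PU}(n+1,1)$, the image $\bar\Gamma$ of $\Gamma$ preserves $W'$, so its closure $\overline{\bar\Gamma}$ is a closed subgroup of ${\rm PU}(n+1,1)$ stabilizing $S^{2n+1}-W'$. Granting that $\overline{\bar\Gamma}$ acts properly on $S^{2n+1}-W'$, averaging produces a $\overline{\bar\Gamma}$-invariant Riemannian metric $g'$ on $S^{2n+1}-W'$, and a conformal rescaling by the inverse square of the (isometry-invariant) distance to $W'$ makes it complete. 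Lifting $g'$ through the $\RR$-bundle projection $\Omega\to S^{2n+1}-W'$ and adding $dt^2$ in the fiber direction gives a complete metric $g$ on $\Omega$ which is invariant under the full preimage of $\overline{\bar\Gamma}$ in ${\rm U}(n+1,1)^{\sim}$, and in particular under $\Gamma$.

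With $g$ in place, the proof concludes as in Lemma~\ref{complete}. The pull-back $\dev^{*}g$ is $\pi$-invariant on $\tilde M$ and descends to a Riemannian metric on the compact manifold $M$, which is therefore complete downstairs. Any Cauchy sequence in $\tilde M$ projects to a sequence in $M$ with a convergent subsequence; lifting this subsequence through a local-isometry chart of $\dev$ and using completeness of $(\Omega,g)$ to pin down the developed limit shows that $\tilde M$ is complete in $\dev^{*}g$. Consequently $\dev:(\tilde M,\dev^{*}g)\to(\Omega,g)$ is a local isometry from a complete Riemannian manifold to a Riemannian manifold, and is therefore a covering map onto its image.

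The main obstacle is verifying properness of $\overline{\bar\Gamma}$ on $S^{2n+1}-W'$. Discreteness of $\Gamma$ in ${\rm U}(n+1,1)^{\sim}$ need not descend to discreteness of $\bar\Gamma$ through the central $\RR$-extension, so one cannot argue properness abstractly. Instead one must exploit the structure of closed subgroups of ${\rm PU}(n+1,1)$ preserving $W'$: when $W'\neq\emptyset$, such a stabilizer lies in an amenable parabolic subgroup of the shape $\mathcal{N}\rtimes({\rm U}(n)\times\RR^{+})$ appearing earlier in the paper, and this together with discreteness of $\Gamma$ should suffice to rule out orbit accumulation in $S^{2n+1}-W'$ and thus establish the required properness.
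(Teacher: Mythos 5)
Your overall strategy---reduce to a $\Gamma$-invariant complete Riemannian metric on $\Omega=\RR\times(S^{2n+1}-W')$, pull back, and invoke compactness of $M$---is the same completeness mechanism the paper uses in Lemma \ref{complete} and Proposition \ref{comleteremark}, and it does carry the case where $p\circ q(\Gamma)$ is discrete and $p\circ q(\Lambda)$ has at least two points (there the limit set $\fL(p\circ q(\Gamma))$ is contained in $W'$ by minimality, so $p\circ q(\Gamma)$ acts properly discontinuously on $S^{2n+1}-W'$). But the step you flag as ``the main obstacle'' is not merely unverified; it is false in exactly the case that makes the theorem delicate. Suppose $W'=\{\infty\}$ is a single point and the holonomy satisfies $\Gamma\leq \RR\times({\rm U}(n)\times\RR^+)$ with a nontrivial $\RR^+$-component (the paper's Case ${\bf II}$-$(2)$). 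Then $\overline{p\circ q(\Gamma)}$ fixes the point $0\in\cN=S^{2n+1}-\{\infty\}$, which does \emph{not} lie in $W'$, and its isotropy at $0$ contains an unbounded dilation subgroup acting on $T_0\cN$ with eigenvalues $\lambda^2,\lambda$. A group acting properly by isometries must have compact isotropy at every point, so no $\overline{p\circ q(\Gamma)}$-invariant Riemannian metric exists on any neighbourhood of $0$ in $S^{2n+1}-W'$, and your averaging-plus-conformal-rescaling construction cannot be performed on $\Omega$. (Your closing assertion that the stabilizer of $W'$ lies in $\cN\rtimes({\rm U}(n)\times\RR^+)$ is also only correct when $W'$ is a point; for larger $W'$ the stabilizer can be a nonelementary discrete group, though in that regime the domain-of-discontinuity argument saves you.)

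The paper repairs precisely this defect by a step your proposal omits: it deletes the additional invariant set $\RR\times\{0\}$, puts the complete invariant metric on $\RR\times(\cN-\{0\})=\RR\times(S^{2n}\times\RR^+)$ where the dilation now acts properly, applies Proposition \ref{comleteremark} to get a covering (in fact a diffeomorphism) of $X=\tilde M-\dev^{-1}(\RR\times\{0\})$ onto that smaller domain, and then rules out $\dev^{-1}(\RR\times\{0\})\neq\emptyset$ by a cohomological-dimension count: otherwise $\dev$ would be a diffeomorphism onto all of $\RR\times\cN$ with $\Gamma$ acting properly and cocompactly, forcing ${\rm cd}(\Gamma)=2n+2$, while $\Gamma\leq\RR\times({\rm U}(n)\times\RR^+)$ has cohomological dimension at most $2$. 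Without this two-stage argument (enlarge the deleted set, then show a posteriori that the enlargement was vacuous), the single-metric approach you describe has a genuine gap. A secondary omission: when $p\circ q(\Gamma)$ is indiscrete you cannot conclude properness ``from discreteness of $\Gamma$'' through the central extension, and the paper instead pins $\Gamma$ into $\RR\cdot\cN\rtimes({\rm U}(n)\times\RR^+)$ via Bieberbach--Auslander and identifies the developing image explicitly; your sketch gestures at this but does not supply the dichotomy \eqref{twocases} that makes it work.
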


\begin{proof}
Let $\Lambda$ be both $\RR$ and $\Gamma$-invariant closed subset
such that ${\dev}(\tilde M)\subset \tilde S^{2n+1,1}-\Lambda$.

\vskip0.1cm {\bf I}. Suppose that $p\circ q(\Lambda)$ contains more
than one point in $S^{2n+1}$.  Let $\fL(G)$ be the \emph{limit set}
for a hyperbolic group $G$ (\cf \cite{cg}). As $p\circ q(\Lambda)$
is invariant under $p\circ q(\Gamma)$, \emph{Minimality} of
\emph{limit set} implies that $\fL(p\circ q(\Gamma))\subset p\circ
q(\Lambda)$. In particular, $(p\circ q)^{-1}(\fL(p\circ
q(\Gamma)))\subset \RR\cdot \Lambda=\Lambda$. It follows
\begin{equation}\label{devmis}
{\dev}:\tilde M\ra \tilde S^{2n+1,1}-(p\circ q)^{-1}(\fL(p\circ
q(\Gamma))).
\end{equation}

{\bf (i)}\, If $p\circ q(\Gamma)$ is discrete, then $p\circ
q(\Gamma)$ acts properly discontinuously on \emph{the domain of
discontinuity} $S^{2n+1}- p\circ q(\Lambda)$. It is easy to see that
the closure $\bar\Gamma\leq {\rm U}(n+1,1)^{\sim}$ acts properly on
$\tilde S^{2n+1,1}- \Lambda$. Since $\Gamma$ is discrete by the
hypothesis, $\Gamma$ acts properly discontinuously on $\tilde
S^{2n+1,1}- \Lambda$ so there exists a $\Gamma$-invariant Riemannian
metric. (Compare \cite{ko} for instance.) As ${\dev}:\tilde M\ra
\tilde S^{2n+1,1}-\Lambda$ is an immersion, the pullback metric by
${\dev}$ is a $\pi$-invariant Riemannian metric on $\tilde M$. Thus
${\dev}:\tilde M\ra  \tilde S^{2n+1,1}-\Lambda$ is a covering map.\\

We have a commutative diagram of group extensions from
\eqref{exatgamma}:
\begin{equation}\label{exatgamma1}
\begin{CD}
1@>>> \RR@>>>{\rm U}(n+1,1)^{\sim}@>{p\circ q}>> {\rm PU}(n+1,1)@>>> 1\\
@. ||@. @AAA @AAA @.\\
1@>>> \RR@>>>\RR\cdot \Gamma@>{p\circ q}>> p\circ q(\Gamma)@>>>1
\end{CD}
\end{equation}Here $\RR\cdot \Gamma$ is the pushout.\\

{\bf (ii)}\, Suppose that $p\circ q(\Gamma)$ is not discrete, then
the identity component of the closure $\overline{p\circ
q(\Gamma)}^0$ is solvable by Bieberbach-Auslander's theorem
\cite[8.24 Theorem]{ra}. We may assume that $\overline{p\circ
q(\Gamma)}^0$ is noncompact, so it follows up to conjugacy
$$\overline{p\circ
q(\Gamma)}^0\leq {\rm PU}(n+1,1)_\infty= \cN\rtimes ({\rm
U}(n)\times \RR^+).$$ As the normalizer of $\overline{p\circ
q(\Gamma)}^0$ is also contained in $\cN\rtimes ({\rm U}(n)\times
\RR^+)$, we have $p\circ q(\Gamma)\leq \cN\rtimes ({\rm U}(n)\times
\RR^+)$. Hence \eqref{exatgamma1} shows that $\Gamma\leq \RR\cdot
\cN\rtimes ({\rm U}(n)\times \RR^+)$. If we note that $\RR^+$ acts
as the multiplication $$\lam(a,z)=(\lam^2\cdot a,\lam\cdot z)$$ for
$\lam\in \RR^+, (a,z)\in \cN$ (\cf \cite{kam1}). Since $\Gamma$ is
discrete, it is easy to check
\begin{equation}\label{twocases}
\begin{split}
&\Gamma\leq \RR\times ({\rm U}(n)\times \RR^+)\, \, \mbox{when}\,
\Gamma\, \mbox{is nontrivial in}\,
\RR^+,\, \mbox{otherwise}\\
& \Gamma\leq \RR\cdot \cN\rtimes {\rm U}(n).
\end{split}\end{equation}
Then it follows that $\fL(p\circ q(\Gamma))\subset \fL({\rm
U}(n)\times \RR^+)=\{0,\infty\}$, $\fL(p\circ q(\Gamma))\subset
\fL(\cN\rtimes {\rm U}(n))=\{\infty\}$ respectively. Thus $(p\circ
q)^{-1}(\fL(p\circ q(\Gamma)))=\RR\cdot \{0,\infty\}$, $(p\circ
q)^{-1}(\fL(p\circ q(\Gamma)))=\RR\cdot \{\infty\}$ respectively. We
obtain

\begin{itemize}
\item
${\dev}:\tilde M\ra \tilde S^{2n+1,1}-\RR\cdot
\{0,\infty\}=\RR\times (S^{2n}\times\RR^+)$ which is a
diffeomorphism. $M$ is diffeomorphic to $\RR\times
(S^{2n}\times\RR^+)/\Gamma$ and so $M$ is finitely covered by
$S^1\times S^{2n}\times S^1$.
\item
${\dev}:\tilde M\ra \tilde S^{2n+1,1}-\RR\cdot \{\infty\}=\RR\times
(S^{2n+1}-\{\infty\})=\RR^+\times \cN$ which is a diffeomorphism.
$M$ is diffeomorphic to $\RR\times \cN/\Gamma$ so that
 $M$ is finitely covered by $S^1\times \cN/\Delta$.
\end{itemize}In the first case, it follows $p\circ
q(\Lambda)=\{0,\infty\}$. For the second case, $p\circ
q(\Lambda)=\{\infty\}$ which is excluded by the assumption of Case
${\bf I}$.

\vskip0.2cm {\bf II}. Suppose that $p\circ q(\Lambda)$ consists of a
single point, say $\{\infty\}\in S^{2n+1}$. As
$\Lambda=\RR\cdot\infty$, it follows ${\dev}:\tilde M\ra \tilde
S^{2n+1,1}-\RR\cdot\{\infty\}=\RR\times\cN$. Since $p\circ
q(\Gamma)$ fixes $\{\infty\}$, $p\circ q(\Gamma)\leq {\rm
PU}(n+1,1)_\infty= \cN\rtimes ({\rm U}(n)\times\RR^+)$. As in the
argument of ${\bf (ii)}$, it follows either $(1)$ $\Gamma\leq
\RR\cdot \cN\rtimes {\rm U}(n)$ or $(2)$ $\Gamma\leq \RR\times({\rm
U}(n)\times \RR^+)$ (\cf \eqref{twocases}).\\

For $(1)$, $\RR\times\cN$ admits an $\RR\cdot \cN\rtimes {\rm
U}(n)$-invariant Riemannian metric so ${\dev}:\tilde M\ra
\RR\times\cN$ is a diffeomorphism. Note that $M$ is diffeomorphic to
$\RR\times \cN/\Gamma$ whose finite cover $S^1\times \cN/\Delta$ is
 a conformally
flat Lorentzian parabolic manifold with virtually nilpotent
fundamental group.
\\

Suppose $(2)$ where $\Gamma\leq \RR\times({\rm U}(n)\times \RR^+)$.
As $\RR\times({\rm U}(n)\times \RR^+)$ leaves $\RR\times\{0\}$
invariant, put $X=\tilde M-{\dev}^{-1}(\RR\times\{0\})$ which is
invariant under $\RR\times({\rm U}(n)\times \RR^+)$. This induces a
developing map ${\dev}:X\ra \RR\times(\cN-\{0\})=\RR\times
(S^{2n}\times \RR^+)$. Since $\RR\times (S^{2n}\times \RR^+)$ admits
a complete Riemannian metric invariant under $\RR\times({\rm
U}(n)\times \RR^+)$, the same proof of Proposition
\ref{comleteremark} implies that
 ${\dev}:X\ra \RR\times(\cN-\{0\})$
is a (covering) diffeomorphism. If ${\dev}^{-1}(\RR\times\{0\})\neq
\emptyset$, then ${\dev}:\tilde M\ra {\dev}(\tilde
M)\subset\RR\times\cN$ is also a diffeomorphism. As $\Gamma$ acts
properly on $\RR\times\cN$, it follows ${\dev}(\tilde
M)=\RR\times\cN$. But $\Gamma$ has cohomological dimension at most
$2$, this cannot occur. Then ${\dev}^{-1}(\RR\times\{0\})=\emptyset$
which concludes that ${\dev}:\tilde M\ra \RR\times(\cN-\{0\})$ is a
diffeomorphism. In this case  $p\circ
q(\Lambda)=\{\infty\}\subset\{0,\infty\}$. This finishes the proof
of the theorem.

\end{proof}

\begin{remark}\label{devcase}
According to the cases ${\bf I}$-${\bf {\rm (i)}}$, ${\bf I}$-${\bf
{\rm (ii)}}$, ${\bf II}$-${\rm (1)}$ and ${\bf II}$-${\rm (2)}$, the
following occurs:
\begin{itemize}
\item[(a)] ${\dev}:\tilde M\ra \tilde S^{2n+1,1}-\Lambda$
is a covering map in which $\#p\circ q(\Lambda)\geq 2$.
\item[(b)] ${\dev}:\tilde M\ra \tilde S^{2n+1,1}-\RR\cdot \{0,\infty\}$
is a diffeomorphism in which $p\circ q(\Lambda)=\{0,\infty\}$.
\item[(c)] ${\dev}:\tilde M\ra \tilde S^{2n+1,1}-\RR\cdot \{\infty\}$ is a
diffeomorphism in which $p\circ q(\Lambda)=\{\infty\}$
\item[(d)] ${\dev}:\tilde M\ra \tilde S^{2n+1,1}-\RR\cdot \{0,\infty\}
=\RR\times (\mathcal N-\{0\})$ is a diffeomorphism in which $p\circ
q(\Lambda)=\{\infty\}$.
\end{itemize}
\end{remark}

\begin{corollary}\label{Fpara}
There exists no $2n+2$ -dimensional compact Fefferman-Lorentz
parabolic similarity manifold with discrete holonomy group.
\end{corollary}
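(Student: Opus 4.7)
The plan is to derive a contradiction from the hypothetical existence of such an $M$ by applying Theorem \ref{th:discrete} directly. Suppose $M$ is a compact $(2n+2)$-dimensional Fefferman-Lorentz parabolic similarity manifold with discrete holonomy $\Gamma\leq {\rm U}(n+1,1)^{\sim}$, and let $(\rho,\dev)$ be its developing pair as in \eqref{devpairS}. The similarity hypothesis forces $\dev(\tilde M)$ to avoid the closed, $\RR$- and $\Gamma$-invariant subset $\Lambda=\RR\cdot\tilde{\hat\infty}=\RR\times\{\infty\}\subset \RR\times S^{2n+1}$: it is $\RR$-invariant because $\hat\infty$ is fixed by the Fefferman center, and $\Gamma$-invariant because $\Gamma\leq \hat{\rm U}(n+1,1)_{\hat\infty}$. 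Theorem \ref{th:discrete} then yields that $\dev$ is a covering map onto its image, and since $p\circ q(\Lambda)=\{\infty\}$ consists of a single point we are in Case \textbf{II} of its proof; by Remark \ref{devcase}, either sub-case (c) or (d) occurs.

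In sub-case (c), $\dev$ is a diffeomorphism of $\tilde M$ onto $\RR\times \mathcal N$ with $\Gamma\leq \RR\cdot \mathcal N\rtimes {\rm U}(n)$, so that $M$ is, up to a finite cover, the nilmanifold $S^1\times \mathcal N/\Delta$ of Proposition \ref{prabolicnil}, on which the Fefferman circle $S^1$ acts as lightlike isometries. If $M$ admitted a compatible Lorentzian similarity (hence flat) structure, its conformal class would contain a flat Lorentz metric for which the same Fefferman $S^1$ remains lightlike. However, by the argument of Remark \ref{4dimclass} (and, more generally, by the structure of compact Lorentzian flat manifolds carrying a lightlike central translation obtained in Propositions \ref{keypro} and \ref{causalclass}), any such distinguished circle on a compact Lorentzian flat manifold is forced to be spacelike, which is a contradiction.

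In sub-case (d), $\dev$ is a diffeomorphism of $\tilde M$ onto $\RR\times(\mathcal N-\{0\})\cong \RR^{2}\times S^{2n}$ with $\Gamma\leq \RR\times({\rm U}(n)\times \RR^{+})$ virtually $\ZZ^{2}$, so $M$ has the topology of a Hopf-type product $S^1\times S^{2n}\times S^1$. But this topological type is absent from the list of compact conformally flat Lorentzian manifolds with virtually solvable holonomy furnished by Theorem \ref{Lsimilaritymanifold1}, whose only non-parabolic models are the Lorentz model $S^1\times S^{m+1}$, the Hopf manifold $S^{m+1}\times S^1$, and the torus $T^{m+2}$; and the parabolic alternative is excluded because $\Gamma$ here has no $\mathcal N$-component at all, contradicting the similarity assumption.

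The principal obstacle is making the exclusion in sub-case (c) rigorous in every dimension $2n+2$: one must extend the four-dimensional reasoning of Remark \ref{4dimclass} to arbitrary $n$, showing that the Fefferman-Lorentz structure on $\RR\times \mathcal N$ is never conformally equivalent to a flat Lorentz structure on the same underlying manifold, since the conformal invariance of the lightlike Fefferman circle clashes with the rigidity of lightlike Killing directions on compact Lorentzian flat manifolds established in Section \ref{Seifertlo}.
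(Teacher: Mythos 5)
There is a genuine gap. Your setup is the same as the paper's: you reduce to Theorem \ref{th:discrete} with $\Lambda=q^{-1}(S^1\cdot\hat\infty)$ and correctly identify that only cases $(c)$ and $(d)$ of Remark \ref{devcase} can occur. But you then fail to exclude them. The paper's actual mechanism is the observation, via the explicit embedding \eqref{Lorentembedd}, that the similarity chart satisfies $\RR^{2n+2}=\cI\times\cN$ where $\cI=S^1-\{\infty\}$ is a \emph{single bounded open interval}, one of the components of $\RR-\ZZ\cdot\infty$ in the universal cover. Since the developing image is connected and contained in $\RR^{2n+2}$, it lies in one such component $\cI\times\cN$; since $\Gamma$ preserves $\RR^{2n+2}$ and hence this component, the $\RR$-summand of $\Gamma$ must be trivial, so $\Gamma\leq\cN\rtimes{\rm U}(n)$ in case $(c)$ and $\Gamma\leq{\rm U}(n)\times\RR^+$ in case $(d)$. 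Then $M\cong\cI\times\cN/\Gamma$ or $M\cong\cI\times(S^{2n}\times S^1/F)$, each of which has the noncompact factor $\cI$ split off, contradicting compactness. This step is entirely absent from your argument.

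Your substitutes do not close the cases. In $(c)$ you invoke ``a compatible Lorentzian similarity (hence flat) structure,'' but the implication similarity $\Rightarrow$ flat is Proposition \ref{ls=lf}, which requires geodesic completeness; compact Lorentzian similarity manifolds need not be complete (Remark \ref{simcomplete}(iii)), so the reduction to the lightlike-circle rigidity of Remark \ref{4dimclass} is unjustified --- and, as you concede yourself, that remark is only established in dimension $4$ and its extension to all $n$ is exactly the missing content. In $(d)$ the appeal to Theorem \ref{Lsimilaritymanifold1} fails because the first alternative of that theorem (``conformally flat Lorentzian parabolic manifold'') is \emph{not} excluded by $\Gamma$ having no $\cN$-component: by Definition \ref{FL} the parabolic condition only asks the structure group to lie in ${\Sim}(\RR^{2n})\times\RR^+$, and ${\rm U}(n)\times\RR^+$ does lie there, so no contradiction results. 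Both cases therefore remain open in your proposal, whereas the paper's interval argument disposes of them uniformly and in all dimensions.
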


\begin{proof}
Recall that there is an equivariant embedding of $\RR^{m+2}$ into
$S^{m+1,1}$ with respect to ${\rm Sim}_L(\RR^{m+2})=\RR^{m+2}\rtimes
({\rm O}(m+1,1)\times \RR^+)={\rm PO}(m+2,2)_{\hat\infty}$:
\begin{equation}\label{Lorentembedd}
\iota: (x,y)\ra \left[\frac{|x|^2-y^2}2-1,\sqrt 2x,\sqrt 2
y,\frac{|x|^2-y^2}2+1\right]
\end{equation}for
$x=(x_1,\cdots,x_{m+1})$ and $|x|=\sqrt {x_1^2+\cdots+x_{m+1}^2}$.
For $m=2n$, let $\displaystyle \hat\infty=[1,0,\dots,0,1]\in
S^{2n+1,1}$. (See \cite{kam1}.) Then $\RR^{2n+2}$ misses
$\hat\infty$ in $S^{2n+1,1}$. Moreover, the orbit of $S^1\,(={\rm
SO}(2))$-action at $\hat\infty\in S^{2n+1,1}$ becomes \[
S^1\cdot\hat\infty=\{
[\cos\theta,\sin\theta,0,\dots,0,-\sin\theta,\cos\theta],\,\
\theta\in \RR\}.\] In view of the formula \eqref{Lorentembedd}, it
follows
\begin{equation}\label{equiembeddingR}\begin{split}
\RR^{2n+2}&\subset S^{2n+1,1}-S^1\cdot\hat\infty\\
          &=S^1/\ZZ_2\times (S^{2n+1}-\{\infty\})=S^1\times \cN.
\end{split}
\end{equation}If we put $\mathcal I=S^1-\{\infty\}$, then note
\begin{equation}\label{conf}
\RR^{2n+2}=\cI\times \cN.
\end{equation}
Putting $\Gamma=\rho(\pi)$, the developing pair reduces:
\begin{equation}\label{first-dev}
(\rho,{\dev}):(\pi,\tilde M)\ra (\Gamma,\RR^{2n+2})\subset ({\rm
U}(n+1,1)^{\sim},\RR\times \cN).
\end{equation}

Let $q\circ\dev: \tilde M\ra \RR^{2n+2}$ be the developing map for
which
 $q(\Gamma)\leq \hat{\rm U}(n+1,1)$.
Then  $\dev$ misses $\Lambda=q^{-1}(S^1\cdot\hat\infty)$ which
 is invariant under both $\Gamma$ and $\RR$.
 In particular, $p\circ q(\Lambda)=\{\infty\}$.
As $\Gamma$ is discrete in ${\rm U}(n+1,1)^{\sim}$ by the
hypothesis, we can apply Theorem \ref{th:discrete} to show that
either $(c)$ or $(d)$ of Remark \ref{devcase} occurs.

According to $(c)$ or $(d)$, it follows either $\Gamma\leq \RR\times
(\mathcal N\rtimes {\rm U}(n))$ or $\Gamma\leq \RR\times( {\rm
U}(n)\times \RR^+)$. However, $\Gamma$ leaves $\RR^{2n+2}$
invariant. As the developing image is connected, we note by
\eqref{conf} that $\dev(\tilde M)\subset \cI\times \cN\subset
\RR\times \cN$. Here $\cI$ is one of the components $\ZZ\cI\subset
\RR$. This implies $\Gamma\leq\mathcal N\rtimes {\rm U}(n)$ or
$\Gamma\leq {\rm U}(n)\times \RR^+$ respectively. Then \eqref
{first-dev} becomes:
\begin{equation*}\label{secdev}
\begin{CD}
(\pi,\tilde M)@>(\rho,{\dev})>> (\Gamma,\cI\times \cN)\subset (
\mathcal N\rtimes {\rm U}(n),\RR\times \cN),\ \ \ \ \ \\
(\pi,\tilde M)@>(\rho,{\dev})>> (\Gamma,\cI\times (\cN-
\{0\}))\subset({\rm U}(n)\times \RR^+,{\cI}\times S^{2n}\times
\RR^+).
\end{CD}
\end{equation*}
 It follows that $M\cong \cI\times \cN/\Gamma$,
or $M\cong {\cI}\times (S^{2n}\times S^1/F)$ respectively. In each
case, $M$ cannot be compact.

\end{proof}

\begin{remark}
 The hypothesis that $\Gamma$ is discrete is
used to eliminate Case ${\bf II}$ that the limit set consists of a
single point. Concerned with the hypothesis on Theorem
$\ref{th:discrete}$, \emph{discreteness} of the holonomy group and
that $\Lambda$ is $\RR$-invariant may be dropped. More generally we
pose

\begin{conjecture}
Given a compact conformally flat Lorentzian manifold,
if a developing map is not surjective, then it is a covering map onto the image.
\end{conjecture}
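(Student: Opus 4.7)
The plan is to generalise Theorem~\ref{th:discrete}, whose proof reduces to a single template (Lemma~\ref{complete} together with Proposition~\ref{comleteremark}): if the developing image avoids a closed subset $\Lambda\subset S^{m+1,1}$ whose complement carries a \emph{complete} $\bar\Gamma$-invariant Riemannian metric, then pulling this metric back by $\dev$ and using compactness of $M=\tilde M/\pi$ forces the pullback to be complete on $\tilde M$, and a complete local isometry between Riemannian manifolds is a covering onto its image.

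First I would take $\Lambda=S^{m+1,1}\setminus\dev(\tilde M)$, which is closed by hypothesis. Note that $\Lambda$ is automatically invariant under the topological closure $\bar\Gamma\leq{\rm PO}(m+2,2)$: a closed $\Gamma$-invariant subset is preserved by every limit of sequences in $\Gamma$. The problem then reduces to producing a complete $\bar\Gamma$-invariant Riemannian metric on $S^{m+1,1}\setminus\Lambda$, after possibly enlarging $\Lambda$ to a still-missed closed invariant subset for which this is possible.

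I would then split into cases according to the structure of $\bar\Gamma^0$. When $\bar\Gamma^0$ is trivial, $\Gamma$ is discrete, and Theorem~\ref{th:discrete} already handles a closely related setting; extending that argument requires dropping the $\RR$-invariance assumption on $\Lambda$, which I expect to be achievable by replacing the central $\RR$ of \eqref{exatgamma} by an appropriate subgroup adapted to $\Lambda$ and adapting the limit-set minimality argument. When $\bar\Gamma^0$ is nontrivial, by Bieberbach--Auslander (\cite[Theorem 8.24]{ra}, as used in Proposition~\ref{lsim12}), $\bar\Gamma^0$ is solvable, and up to conjugacy sits in one of the maximal amenable subgroups of~\eqref{amen}. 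The closed invariant subsets of $S^{m+1,1}$ for such subgroups are described in \eqref{group-s} and in the proof of Proposition~\ref{Lsimilaritymanifold0}; their complements are homogeneous spaces with natural complete Riemannian metrics, so after enlarging $\Lambda$ to include the relevant canonical invariant set, Proposition~\ref{comleteremark} yields the covering.

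The main obstacle is the remaining case where $\bar\Gamma^0$ is noncompact semisimple, equivalently where $\bar\Gamma$ is non-elementary in a Lorentzian sense. In conformally flat \emph{Riemannian} geometry $({\rm PO}(n+1,1),S^n)$ the analogous problem is handled by Kulkarni's trichotomy and minimality of the Kleinian limit set; but $S^{m+1,1}$ sits at infinity of a rank-two symmetric space and no such clean dichotomy is currently available. The conjecture seems to demand a Lorentzian analogue asserting that a non-elementary $\bar\Gamma\leq{\rm PO}(m+2,2)$ acts properly on the complement of a canonical limit set in $S^{m+1,1}$, together with minimality forcing that limit set to lie inside the given $\Lambda$. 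Producing such a canonical limit set with the right minimality properties in indefinite conformal geometry is, I believe, the hard core of the conjecture.
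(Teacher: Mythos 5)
This statement is posed in the paper as a \emph{conjecture}; the paper contains no proof of it, only the special case Theorem \ref{th:discrete} (compact conformally flat Fefferman--Lorentz parabolic manifolds, discrete holonomy, and a missed closed set invariant under both $\RR$ and $\Gamma$). Your proposal is therefore rightly structured as a reduction strategy rather than a proof, and you correctly identify the template (Lemma \ref{complete} and Proposition \ref{comleteremark}: a complete $\bar\Gamma$-invariant metric on the complement of a missed closed invariant set, pulled back via $\dev$ and combined with compactness of $M$, forces $\dev$ to be a covering). But the proposal does not close the conjecture, and you say so yourself: the case where $\overline{\Gamma}{}^0$ is noncompact semisimple, i.e.\ where $\Gamma$ is non-elementary, is exactly where no Lorentzian analogue of the Kleinian limit set with the required minimality and properness properties is available. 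That is not a technical loose end; it is the entire content of the conjecture.

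Two further concrete problems with the intermediate steps. First, your appeal to Bieberbach--Auslander to conclude that a nontrivial $\bar\Gamma^0$ is solvable is a misapplication: \cite[Theorem 8.24]{ra} is used in Proposition \ref{lsim12} for the closure of the \emph{projection} of a discrete, properly discontinuous, cocompact group, whereas the holonomy group of a general compact conformally flat Lorentzian manifold need not be discrete in ${\rm PO}(m+2,2)$, and its closure can perfectly well have semisimple identity component --- which is why your final paragraph has to reopen that case after your second paragraph claimed to dispose of it. Second, even in the discrete case you cannot simply cite Theorem \ref{th:discrete}: its proof pushes $\Lambda$ down to $S^{2n+1}$ and invokes Chen--Greenberg minimality of the limit set in ${\rm PU}(n+1,1)$, and this step uses both the Fefferman--Lorentz parabolic reduction (the central $\RR$ of \eqref{exatgamma}) and the $\RR$-invariance of $\Lambda$ in an essential way; ``replacing the central $\RR$ by an appropriate subgroup adapted to $\Lambda$'' is precisely the argument that would need to be supplied, and for a general conformally flat Lorentzian structure there is no such central direction to quotient by. So the proposal is a reasonable research plan, consistent with how the paper handles its special cases, but it contains no proof of the statement, and the statement remains open in the paper as well.
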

\end{remark}

%\textcolor{red}{From here}

\end{document}